\documentclass[a4paper,11pt]{amsproc}
\usepackage[T1]{fontenc}
\usepackage{lmodern}
\usepackage{amssymb,mathrsfs}
\usepackage{microtype}
\usepackage{needspace}
\usepackage[a4paper,left=22mm,right=22mm,top=20mm,bottom=22mm,
 includehead,headsep=6mm]{geometry}
\usepackage{etoolbox}
\makeatletter
\patchcmd{\@maketitle}{\global\topskip8pc\relax}
 {\global\topskip2pc\relax}{}
 {\PackageError{commutators}{Could not adjust title spacing}{}}
\makeatother
\usepackage[hidelinks]{hyperref}
\hypersetup{pdftitle={Commutators in continuous regular rings},
 pdfauthor={Andreas Thom and Jiaqi Wang}}
\newtheorem{theorem}{Theorem}[section]

\newtheorem{lemma}[theorem]{Lemma}
\newtheorem{corollary}[theorem]{Corollary}
\newtheorem{maintheorem}{Theorem}

\newtheorem{maincorollary}[maintheorem]{Corollary}
\theoremstyle{definition}

\theoremstyle{remark}
\newtheorem{remark}[theorem]{Remark}
\newcommand{\K}{\mathbb K}
\newcommand{\C}{\mathbb C}
\newcommand{\U}{\mathscr U}
\newcommand{\M}{\mathcal M}
\newcommand{\N}{\mathcal N}
\newcommand{\R}{\mathcal R}
\newcommand{\cA}{\mathcal A}
\newcommand{\W}{\mathsf W}
\newcommand{\QW}{\mathsf D}
\DeclareMathOperator{\rank}{rank}
\DeclareMathOperator{\diag}{diag}

\DeclareMathOperator{\ann}{ann}

\dedicatory{To the memory of Richard V. Kadison}
\title{Commutators in continuous regular rings}
\author{Andreas Thom}
\address{A.T., Institute of Geometry, TU Dresden, 01062 Dresden, Germany}
\email{andreas.thom@tu-dresden.de}
\author{Jiaqi Wang}
\address{J.W., State Key Laboratory of Mathematical Sciences, Academy of Mathematics
and Systems Science, Chinese Academy of Sciences and
University of Chinese Academy of Sciences, Beijing 100049, China}
\email{jiaqiwang@amss.ac.cn}
\date{September 25, 2026}
\subjclass[2020]{Primary 16E50; Secondary 16S32, 16U70, 46L10, 47B47}
\keywords{Continuous regular ring, commutator, rank metric, affiliated operator,
Weyl algebra}

\begin{document}
\begin{abstract}
Let $R$ be a continuous von Neumann regular ring with no nonzero
abelian idempotents. We prove that every element of $R$ is a single
additive commutator. In particular, every operator affiliated with a
type $\mathrm{II}_1$ von Neumann algebra is a commutator of operators
affiliated with the same algebra. Our commutator theorem, together with
a theorem of Jaikin-Zapirain and L\'opez-\'Alvarez, implies that, if $R$ is a
nonzero unital algebra over a field $\K$ of characteristic zero, the
quotient division ring of the first Weyl algebra over $\K$ embeds
unitally into $R$. We also give a direct proof of this consequence.
\end{abstract}
\maketitle

\section{Introduction}

The problem of representing the identity as an additive commutator originates in
the operator formulation of the Heisenberg relation.  The trace excludes
such a representation in a complex matrix algebra. The classical
theorems of Wintner \cite{Wintner} and Wielandt \cite{Wielandt} exclude it
for bounded operators on any Hilbert space. 

For a finite von Neumann algebra $\M$, the closed densely defined
operators affiliated with $\M$ form the Murray--von Neumann algebra
$\U(\M)$. Here affiliation means that the operator commutes,
including its domain, with every unitary in the commutant $\M'$.
Addition and multiplication are defined by taking closures.
Thus the equation $[A,B]=AB-BA=1$ is an algebraic question in a unital
$*$-algebra. Kadison and Liu \cite[Section~7, p.~39]{KL} explicitly asked
whether it can hold when the affiliated operators are not required to
be self-adjoint. Kadison, Liu and the first author
\cite[Theorem~1]{KLT} proved that the identity is a sum of two
commutators in $\U(\M)$ whenever $\M$ is of type $\mathrm{II}_1$,
and reiterated the question about a single commutator
\cite[pp.~235, 238]{KLT}.
The late Richard Kadison drew attention to this problem under the name \emph{Heisenberg–von Neumann puzzle.}

For bounded elements, Fack and de la Harpe \cite[Theorem~3.2]{FdH} proved that
the finite sums of commutators in a finite von Neumann algebra are
exactly the elements with vanishing center-valued trace.
Dykema and Kalton \cite[Lemma~4.3]{DK} give twelve commutators for
normal affiliated operators in a type~$\mathrm{II}_1$ factor with
separable predual. Decomposition into real and imaginary parts
therefore expresses every operator affiliated with such a factor as
a sum of at most twenty-four commutators.

\medskip

We work in the more general setting of von Neumann's continuous
regular rings \cite{vN}. All rings are associative and unital. A ring $R$ is
\emph{von Neumann regular} if for every $a\in R$ there is $b\in R$
with $aba=a$. Its principal right ideals form a complemented modular
lattice $\mathcal L(R)$. A regular ring is \emph{continuous} if this
lattice is complete and satisfies the continuity identities for
directed joins and filtered meets. It is \emph{irreducible} if it
is nonzero and is not a direct product of two nonzero rings.
An irreducible continuous ring has a field as its center and a
canonical normalized rank function $\rho:R\to[0,1]$. Such a ring is
called \emph{nondiscrete} if the metric $d(a,b)=\rho(a-b)$ is nondiscrete.
An idempotent $e\in R$ is \emph{abelian} if every idempotent of
$eRe$ is central in $eRe$. Nondiscrete irreducible continuous regular
rings have no nonzero abelian idempotents.
Section~\ref{sec:background} recalls the dimension theory used below.

\medskip

Our main result is the following theorem:
\begin{maintheorem}\label{thm:main}
Every element of a continuous regular ring with no nonzero abelian
idempotents is an additive commutator. In particular, $1 \in R$ is an additive commutator.
\end{maintheorem}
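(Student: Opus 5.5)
We will reduce the statement to a matrix problem over a corner ring and then build the commutator on a geometric decomposition of the identity. Because $R$ is continuous with no nonzero abelian idempotents, the dimension theory of Section~\ref{sec:background} lets us halve idempotents. Concretely, for every idempotent $e$ there are orthogonal idempotents $e_1,e_2$ with $e=e_1+e_2$ and $e_1\sim e_2$. Iterating, and using completeness of $\mathcal L(R)$, we obtain orthogonal idempotents $f_0,f_1,f_2,\dots$ with $f_n\sim f_{n+1}$ in a scaled sense. More precisely, we write $1=\sum_{n\ge0} p_n$ as a convergent orthogonal sum in the rank metric, where $p_{n+1}$ is equivalent to a halving of $p_n$. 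The continuity identities guarantee that such countable orthogonal sums exist and that $R$ is complete for $d$. (We may pass to the reduction $R\cong\prod$ of its irreducible pieces only through the center-valued dimension, so we work with the center-valued rank throughout.)

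The core of the argument is the identity $1=[A,B]$, modelled on a ``unilateral shift with geometrically shrinking blocks.'' Writing $R\cong M_2(eRe)$ with $e\sim1-e$ and iterating, we realize an infinite block decomposition in which $p_n R p_n\cong M_{2^n}(S)$ scaled appropriately. In this picture a shift-type operator $S$ maps $p_n$ isomorphically onto a half of $p_{n+1}$. Because the blocks shrink, the infinite sums defining $A=\sum A_n$ and $B=\sum B_n$ converge in $d$. The rank of the tail is controlled by $\sum_{k\ge n}\rho(p_k)\to0$. The telescoping of $AB-BA$ across blocks, which is impossible in finite dimensions because of trace, is then compensated by the fact that the trace-like obstruction escapes to the tail of rank tending to zero.

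For a general element $x$ we proceed in two steps. First we show that $x$ is similar to an element with zero ``diagonal'' relative to a halving decomposition, $x\sim\begin{pmatrix}0&*\\ *&*\end{pmatrix}$. Here we use regularity: choosing an idempotent $e$ with $eRe$ of half rank and $exe$ compressible, we use a kernel/image argument on $x$ minus a central scalar. Second, we iterate this to an infinite block-upper form in which $x$ has the shape $D+N$, with $D$ block-diagonal and $N$ strictly shifting. We then solve $[A,B]=x$ block by block, taking $A$ to be the weighted shift above and solving successive equations $A_nB_n-B_{n-1}A_{n-1}=x_n$. These equations are solvable because the shift is right-invertible between consecutive blocks. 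Convergence again comes from the rank-metric completeness.

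The main obstacle is the convergence and well-definedness of these infinite constructions. Products of $d$-limits are fine since multiplication is $d$-continuous. The delicate point is ensuring that the recursively chosen $B_n$ live in blocks of summable rank even though the $x_n$ are arbitrary, and that the limit actually satisfies the equation in the ring rather than only on each finite truncation. We would first try to arrange every correction at stage $n$ to be supported in $p_{n}+p_{n+1}$. Then the error of the $n$-th partial commutator is supported on a projection of rank $\le\rho(p_{n+1})+\rho(p_{n+2})$, which tends to zero.
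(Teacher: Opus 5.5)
Your proposal has a genuine gap at its core: the identity commutator is never constructed, and the shift model you sketch cannot produce it.

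The description is already inconsistent with dimensions. If $\Delta(p_{n+1})=\frac12\Delta(p_n)$, then $p_n$ cannot map isomorphically onto half of $p_{n+1}$. But the problem is structural. Any weighted shift $A$ along such blocks, forward or backward, has $\Delta(A)\leq1-\Delta(p_0)=\frac12$. Such an $A$ never satisfies $[A,B]=1$ when $2$ is invertible, for instance in $\U(\M)$. Indeed, let $k$ be an idempotent with $kR=\ann_r(A)$. Then $k=[A,B]k=ABk$, so $kR\subseteq AR$ and $1-\Delta(A)\leq\Delta(A)$. If $\Delta(A)\leq\frac12$, equality gives $kR=AR$, hence $A^2=AkA=0$. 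Then $2A=A[A,B]+[A,B]A=A^2B-BA^2=0$.

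Saying that the trace obstruction ``escapes to the tail'' restates the problem rather than solving it. What is needed is a sequence of finite models whose commutator defects have small dimension \emph{and} which converge in dimension under compatible embeddings. The paper obtains this from the truncated Weyl models $[D_n,X_n]=I_n-nE_n$. It combines them with the Chinese remainder comparison of Lemma~\ref{lem:comparison}, which matches $D_{2n},X_{2n}$ with $D_n\oplus D_n,X_n\oplus X_n$ up to rank $2$. After the recursive changes of basis, $\Delta(A_{k+1}-A_k)\leq2^{-k}$, so the limits exist and satisfy $[A,B]=1$.

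The reduction for general $x$ also fails as stated. A central unit such as $x=1$ admits no nonzero idempotent $e$ with $exe=0$, so it is not similar to a matrix with a zero diagonal block. You also do not explain how $c+[A',B']$ with $c$ central would become a single commutator. The infinite block-banded form $D+N$ is asserted rather than proved, and so is the solvability of the block equations with summable dimensions. With $A$ a weighted shift, these steps are blocked by the obstruction above.

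The paper instead takes a maximal orthogonal family of idempotents with zero compression and regroups it into countably many $e_i$ (Lemma~\ref{lem:zero-partition}). The complementary corner $f$ then has central compression $c=faf$. The paper uses the block-diagonal element $X=P+\sum_iN_i$. It is built from companion matrices of pairwise comaximal integer polynomials and a corner pair $[P,Q]=f$ with every $p_j(P)$ invertible. The Sylvester equations $[X,Z_j]=ae_j$ and $[X,W_j]=e_jaf$ of Lemma~\ref{lem:sylvester} then have solutions of dimension at most $\Delta(e_j)$. This gives convergence and $[X,cQ+\sum_jZ_j+\sum_jW_j]=a$.
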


For a field $\K$, the \emph{first Weyl algebra}
$A_1(\K)=\K\langle x,y\rangle/(yx-xy-1)$
is the unital associative $\K$-algebra generated by $x$ and $y$
subject to the relation $yx-xy=1$. It is a left and right Noetherian
domain, hence a left and right Ore domain; see
\cite[Theorems~1.2.9, 1.3.2 and~2.1.15]{MR}. It therefore has a
\emph{quotient division ring}, denoted by $Q(A_1(\K))$, obtained
by inverting all its nonzero elements.

The identity case of Theorem~\ref{thm:main}, together with the
uniqueness theorem for Sylvester matrix rank functions of
Jaikin-Zapirain and L\'opez-\'Alvarez \cite[Proposition~4.2]{JL},
implies the following corollary: apply these results in the standard
continuous ring $\mathfrak C_\K$ of Section~\ref{sec:background},
as in Remark~\ref{rem:weyl-extension}, and then use
Lemma~\ref{lem:embedding}.
We give a direct proof in Section~\ref{sec:weyl}.

\begin{maincorollary}\label{thm:division}
Let $\K$ be a field of characteristic zero, and let $R$ be a nonzero
continuous regular unital $\K$-algebra with no nonzero abelian
idempotents. The division ring $Q(A_1(\K))$ embeds unitally into $R$
as a $\K$-algebra.
\end{maincorollary}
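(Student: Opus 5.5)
Using Theorem~\ref{thm:main}, we write $1=yx-xy$ for some $x,y\in R$, which gives a unital $\K$-algebra homomorphism $\varphi\colon A_1(\K)\to R$. The task is to arrange $x,y$ so that $\varphi$ is injective and every nonzero element of $A_1(\K)$ maps to an invertible element of $R$. Once that holds, the universal property of the Ore localization extends $\varphi$ uniquely to $Q(A_1(\K))\to R$. A homomorphism out of a division ring with nonzero target is automatically injective, so this extension is the desired embedding. In a regular ring an element is invertible exactly when it is not a zero divisor, and in the irreducible case exactly when it has full rank. So the criterion we need is that $\varphi(p)$ has trivial left and right annihilators for every $0\neq p\in A_1(\K)$.

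We first reduce to a convenient ambient ring. It suffices to embed $Q(A_1(\K))$ unitally into some continuous ring $S$ and then embed $S$ unitally into $R$; this is Lemma~\ref{lem:embedding}, applied to the standard continuous ring $\mathfrak C_\K$. That lemma should say that $\mathfrak C_\K$ (the rank completion of $\varinjlim M_{2^n}(\K)$) embeds unitally into every nonzero continuous regular $\K$-algebra without abelian idempotents. One proves it by halving idempotents repeatedly to get a compatible system of matrix units $M_{2^n}(eRe)\cong R$, then extending to the completion via the rank function, or via the center-valued dimension if $R$ is not irreducible. So we may assume $R=\mathfrak C_\K$, which is irreducible, has a unique rank function $\rho$, and is complete.

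In $\mathfrak C_\K$, Theorem~\ref{thm:main} gives $x,y$ with $[y,x]=1$. Pulling back $\rho$ along $\varphi$ and then applying $\rho(\varphi(\cdot))$ to matrices produces a Sylvester matrix rank function on $A_1(\K)$. The key step is to show that this function is forced to be the one coming from $Q(A_1(\K))$, meaning it takes the value $1$ on every nonzero $p$. This is exactly the uniqueness result \cite[Proposition~4.2]{JL}: in characteristic zero the Weyl algebra has a unique Sylvester rank function, the one induced by its division ring of fractions. Hence $\rho(\varphi(p))=1$ for all $p\neq 0$, so $\varphi(p)$ is invertible in $\mathfrak C_\K$ and the localization argument above finishes the proof.

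The main obstacle is a proof independent of \cite{JL}, as promised in Section~\ref{sec:weyl}. There I would show directly that $\rho(\varphi(p))=1$. First, write $\operatorname{ad}_x=[x,\cdot\,]$; since $\operatorname{ad}_x(y)=-1$, we get $\operatorname{ad}_x(p(x,y))=-\partial_y p$. Second, ranks in a regular ring are subadditive, so $\rho([x,a])\le 2\rho(a)$, and hence a rank defect of $a$ controls that of $[x,a]$. Third, take $p$ of minimal $y$-degree among elements with $\rho(\varphi(p))<1$. Fourth, apply this to annihilating idempotents $e$ with $\varphi(p)e=0$ and differentiate the relation, using characteristic zero so the leading coefficient survives, to lower the degree and reach a contradiction. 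The step I expect to fail most easily is controlling the rank defect under differentiation. I would first try to work with annihilator projections and the identity $[x,y^n]=-ny^{n-1}$, and fall back to the rank-function uniqueness argument if that does not close.
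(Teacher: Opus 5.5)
Your argument is correct, and it is the route the paper itself indicates in the introduction and carries out in Remark~\ref{rem:weyl-extension}. You take any pair with $[y,x]=1$ in $\mathfrak C_\K$ (only the identity case of Theorem~\ref{thm:main} is needed) and pull the faithful rank back to a Sylvester matrix rank function on $A_1(\K)$. The uniqueness theorem of \cite{JL} then forces $\rho(\varphi(p))=1$ for $p\neq0$, which gives invertibility; you pass to the Ore localization and compose with Lemma~\ref{lem:embedding}.

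The proof the paper actually writes out in Section~\ref{sec:weyl} is different. It uses neither \cite{JL} nor an arbitrary commutator pair, but the explicit pair $A,B$ of Theorem~\ref{thm:identity}. There $\varphi(P)$ is the rank limit of the similar matrices $C_k^{-1}P_{2^k}C_k$, with $P_n=\sum_j p_j(X_n)D_n^j=\pi_nL|_{V_n}$. Characteristic zero enters through the nonvanishing falling factorial $r(r-1)\cdots(r-m+1)$, which bounds $\dim_\K\ker L\le m$ for $L=\sum_j p_j(t)(d/dt)^j$. Hence $\rank P_n\ge n-m-d$ and $\rho(\varphi(P))=1$ (Lemma~\ref{lem:weyl-rank}).

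Your route is shorter and works for every commutator pair, and it also gives preservation of rectangular matrix ranks, but it rests on a deep external uniqueness theorem. The paper's direct route is elementary and self-contained, but it is tied to the specific truncated models.

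You do not rely on the JL-free sketch in your last paragraph, and it would not supply such a proof. From $\operatorname{ad}_x(p)=-\partial_yp$ and minimality of the $y$-degree you only obtain $1=\rho(\varphi(\partial_yp))\le2\rho(\varphi(p))$, that is, $\rho(\varphi(p))\ge\frac12$, which is no contradiction. The base case $p\in\K[x]$ is not addressed at all. For an arbitrary pair in $\mathfrak C_\K$ the full claim is essentially the content of \cite{JL}. The ingredient the paper uses instead is finite-dimensional approximation by the explicit models, where kernel dimensions can actually be counted.
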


To prove Theorem~\ref{thm:main}, we construct an identity commutator
from integer matrices. On
$\mathbb Z[t]_{<n}$, truncated multiplication $X_n$ and
differentiation $D_n$ satisfy $[D_n,X_n]=I_n-nE_n$, where $E_n$
factors through $\mathbb Z$. Taking remainders modulo $t^n$ and
$(t-1)^n$ compares the model of size $2n$ with two copies of the
model of size $n$, with each error factoring through $\mathbb Z^2$.
Recursive changes of basis yield convergent sequences in compatible
dyadic matrix decompositions of $R$. Convergence is controlled by
the center-valued dimension of principal right ideals.

Section~\ref{sec:commutators} uses this construction to represent
an arbitrary element as a commutator. The main technical lemma constructs a maximal orthogonal family
of idempotents with zero compression that leaves a central compression
in the complementary corner. We regroup central portions of this
family into countably many idempotents. Companion matrices of
pairwise comaximal integer polynomials in the zero-compression
corners, together with an identity commutator in the remaining
corner, give solvable rectangular commutator equations. The
dimension of each column or row correction is bounded by that of
one idempotent, which ensures convergence of the resulting series.

Under the hypotheses of Corollary~\ref{thm:division}, we work in the
rank completion of $\K\to M_2(\K)\to M_4(\K)\to\cdots$.
Lemma~\ref{lem:embedding} embeds this completion into $R$ using
the standard matrix-unit construction; compare Schneider and the
first author \cite{ST}. In characteristic zero, a nonzero polynomial
differential operator has a kernel of bounded dimension on the
finite models. Its limit is therefore invertible.

For affiliated operators our results have the following consequences.

\Needspace{7\baselineskip}
\begin{maincorollary}\label{cor:aff-intro}
If $\M$ is a von Neumann algebra of type $\mathrm{II}_1$, then
every element of $\U(\M)$ is a commutator, and $Q(A_1(\C))$
embeds unitally into $\U(\M)$.
\end{maincorollary}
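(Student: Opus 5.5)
The plan is to deduce the corollary from Theorem~\ref{thm:main} and Corollary~\ref{thm:division}. Everything reduces to checking that $\U(\M)$ satisfies their hypotheses: it must be a nonzero continuous von Neumann regular ring, a unital $\C$-algebra, with no nonzero abelian idempotents.

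\textbf{Step 1: regularity and continuity.} The algebraic structure is classical, going back to Murray--von Neumann and Berberian. For $\M$ finite, $\U(\M)$ is a unital $*$-algebra and is regular. Given $a\in\U(\M)$, take its polar decomposition $a=v|a|$ with $v\in\M$ and $|a|$ affiliated. The spectral projections let us build a relative inverse $b=|a|^{-1}q\,v^*$, where $q$ is the support projection of $|a|$ and $|a|^{-1}$ is computed on that support. One then checks $aba=a$.

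Principal right ideals of $\U(\M)$ correspond bijectively to projections of $\M$, namely $a\U(\M)=p\U(\M)$ with $p$ the range projection of $a$. Hence $\mathcal L(\U(\M))$ is isomorphic to the projection lattice of $\M$. Because $\M$ is finite, this lattice is complete, modular, and satisfies the continuity identities for increasing and decreasing nets. These follow from normality of the center-valued trace, or equivalently from the finiteness argument that $\sup$ and $\inf$ commute with monotone limits. So $\U(\M)$ is a continuous regular ring. I would cite Berberian's treatment of the regular ring of a finite AW$^*$/von Neumann algebra for this, rather than reprove it.

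\textbf{Step 2: no nonzero abelian idempotents.} This step needs care, since idempotents in $\U(\M)$ need not be projections or bounded. Let $e\in\U(\M)$ be a nonzero idempotent. Then $e\U(\M)=p\U(\M)$ for its range projection $p\neq 0$, and $e$ and $p$ are similar in $\U(\M)$. Indeed, two idempotents generating the same right ideal satisfy $ep=p$ and $pe=e$, and $u=1-p+e$ is invertible with $u p u^{-1}=e$. So $e\U(\M)e\cong p\U(\M)p\cong\U(p\M p)$, and it suffices to treat a projection $p$.

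Since $\M$ is of type $\mathrm{II}_1$, the corner $p\M p$ is again of type $\mathrm{II}_1$. Halving gives $p=p_1+p_2$ with $p_1\sim p_2$ both nonzero. A partial isometry $w$ with $w^*w=p_1$ and $ww^*=p_2$ shows $p_1$ is not central in $p\U(\M)p$: if it were, then $p_1w=wp_1$, but $p_1w=p_1p_2w=0$ while $wp_1=w\neq0$. Hence $p$ is not abelian, and the hypothesis holds. Moreover $\U(\M)\neq0$, and $\C\subseteq Z(\U(\M))$ makes it a unital $\C$-algebra.

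\textbf{Step 3: conclusion.} Theorem~\ref{thm:main} now gives that every element of $\U(\M)$ is a commutator of elements of $\U(\M)$. Corollary~\ref{thm:division}, with $\K=\C$ of characteristic zero, gives the unital embedding of $Q(A_1(\C))$. The main obstacle is Step~1: verifying continuity of the lattice of principal right ideals, through the identification with the projection lattice. I would handle it by citing the standard results rather than by direct computation.
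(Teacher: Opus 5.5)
\textbf{Verdict.} Your argument is correct. For the commutator assertion it is the paper's argument. The paper also cites a standard result (Schneider, Proposition~4.4) for continuity of $\U(\M)$ and for the identification of principal right ideals with projections. It then replaces an idempotent $e$ by a projection $p$ with $e\U(\M)=p\U(\M)$, uses noncommuting projections in $p\M p$ to rule out abelian idempotents, and invokes Theorem~\ref{thm:main}.

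\textbf{The embedding of $Q(A_1(\C))$.} Here the routes differ. You apply Corollary~\ref{thm:division} to $\U(\M)$ as a black box. This is legitimate once your Steps~1--2 are done, because its proof uses Lemma~\ref{lem:embedding} to place $\mathfrak C_\C$ inside any such ring. The paper instead fixes a unital hyperfinite $\mathrm{II}_1$ subfactor $\R\subseteq\M$ with a dyadic tower of matrix $*$-subalgebras. It extends the rank-isometric inclusion to $\mathfrak C_\C\hookrightarrow\U(\R)$, applies Theorems~\ref{thm:identity} and~\ref{thm:weyl} there, and pushes forward along $\U(\R)\hookrightarrow\U(\M)$. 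Your route is shorter. The paper's route gives more: the identity commutator and the copy of $Q(A_1(\C))$ already lie in $\U(\R)$ for a hyperfinite $\R$ and are built from $*$-matrix units. This is the remark the paper makes after the statement in the introduction.

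\textbf{A small slip in Step~2.} With $u=1-p+e$ you have $ue=e=pu$, so $u^{-1}pu=e$, not $upu^{-1}=e$. You can check this in $M_2$ with $p=e_{11}$ and $e=e_{11}+e_{12}$: there $upu^{-1}=e_{11}-e_{12}$. Alternatively, take $u=1-e+p$. The similarity is not actually needed. The equality $e\U(\M)=p\U(\M)$ already gives $e\U(\M)e\cong\operatorname{End}(e\U(\M))=\operatorname{End}(p\U(\M))\cong p\U(\M)p$, which is all the paper uses.
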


The identity commutator and the division-ring embedding can both
be constructed in $\U(\R)$ for a unital hyperfinite
$\mathrm{II}_1$ subfactor $\R\subseteq\M$, without a separability
assumption on $\M$.

The commutator factors representing $1$ in $\U(\M)$ necessarily
have substantial analytic irregularity. Ber, Sukochev and Zanin
\cite{BSZ} proved that neither factor can be normal. Further results on the Heisenberg
relation for locally measurable operators were obtained by Ber,
Huang, Kudaybergenov and Sukochev
\cite[Corollaries~3.6 and~3.7]{BHKS}.
In a type $\mathrm{II}_1$ factor, Nayak \cite[Corollary~5.3]{Nayak}
showed that at least one factor fails logarithmic integrability, and
hence belongs to none of the noncommutative $L^p$ spaces,
$0<p\leq\infty$.

\section{Continuous rings and dimension}\label{sec:background}

Unless explicitly stated otherwise, $R$ is a nonzero continuous regular
ring with no nonzero abelian idempotents. Such a ring is directly finite;
see \cite[Corollary~13.23 and Proposition~5.2]{Goodearl}. It is both
right and left self-injective by \cite[Theorem~3 and its corollary]{Utumi}.
Every nonzero corner has the same properties.
We use the dimension theory of these rings as developed in
\cite[Chapters~10--11]{Goodearl} and \cite[Section~5-3]{GW}.

The central idempotents form a complete Boolean algebra. Its Stone
space $\Omega$ is extremally disconnected, and the ordered vector
lattice $\mathcal E=C(\Omega,\mathbb R)$ is Dedekind complete.
We identify a central idempotent with the characteristic function
of its corresponding clopen subset of $\Omega$. The normalized
dimension of principal right ideals takes values in
$C(\Omega,[0,1])$; see \cite[Theorem~3-9.10 and Proposition~5-3.3]{GW}.
Write $\Delta(a)=\Delta(aR)$ for $a\in R$.
For $a,b\in R$,
\begin{gather}
 \Delta(1)=1,\qquad \Delta(a)=0\ \Longleftrightarrow\ a=0,
 \label{eq:rank-basic}\\
 \Delta(a+b)\leq\Delta(a)+\Delta(b),\qquad
 \Delta(ab)\leq\min\{\Delta(a),\Delta(b)\}.
 \label{eq:rank-ineq}
\end{gather}
Dimension is invariant under module isomorphisms and additive on
direct sums. In particular, it is additive on orthogonal idempotents,
and
\begin{equation}\label{eq:rank-nullity}
 \Delta\bigl(\ann_r(a)\bigr)=1-\Delta(a).
\end{equation}
For every central idempotent $z$, one has $\Delta(za)=z\Delta(a)$.
All inequalities and suprema of dimension functions are understood
in the order of $\mathcal E$.

Arbitrary intersections of principal right ideals are principal.
The join of a family of principal right ideals is the principal
right ideal in which their sum is essential; see
\cite[Proposition~5-3.3]{GW}. Here a submodule is \emph{essential}
if it meets every nonzero submodule nontrivially. Dimension preserves
increasing joins and decreasing meets:
\begin{equation}\label{eq:dimension-normal}
 \Delta\left(\bigvee_n I_n\right)
   =\sup_n\Delta(I_n),\qquad
 \Delta\left(\bigcap_n J_n\right)
   =\inf_n\Delta(J_n),
\end{equation}
for increasing sequences $(I_n)$ and decreasing sequences $(J_n)$;
see \cite[Theorem~13.18]{Wehrung}.

For idempotents, $f\leq e$ means $ef=fe=f$. If
$0\leq d\leq\Delta(e)$ in $\mathcal E$, there is an idempotent
$f\leq e$ with $\Delta(f)=d$. Idempotents of equal dimension
are equivalent: if $\Delta(e)=\Delta(f)$, there are
$u\in fRe$ and $v\in eRf$ with $vu=e$ and $uv=f$.
Consequently, for every positive integer $n$, each nonzero corner
$eRe$ admits a unital $n\times n$ matrix decomposition, obtained by
splitting $e$ into $n$ equivalent orthogonal idempotents.

The \emph{central support} $c(e)$ of an idempotent $e$ is the least
central idempotent $z$ with $ze=e$. For a nonnegative function
$d\in\mathcal E$, its \emph{carrier} $\operatorname{car}(d)$ is the central idempotent
corresponding to the clopen set $\overline{\{\omega:d(\omega)>0\}}$.
The carrier of $\Delta(e)$ is $c(e)$.

The dimension inequalities imply
\begin{align}
 |\Delta(a)-\Delta(b)|&\leq\Delta(a-b),\label{eq:rank-cont}\\
 \Delta(a_1b_1-a_2b_2)&\leq\Delta(a_1-a_2)+\Delta(b_1-b_2).
 \label{eq:product-cont}
\end{align}
We say that $a_n$ converges to $a$ in dimension if there are
$t_n\in\mathcal E$ with $t_n\downarrow0$ and
$\Delta(a_n-a)\leq t_n$. Such limits are unique. Addition,
multiplication and evaluation of polynomials with central
coefficients preserve this convergence, by
\eqref{eq:rank-ineq}--\eqref{eq:product-cont}.

\begin{lemma}\label{lem:order-complete}
Let $(a_n)$ be a sequence in $R$, and let
$\varepsilon_n\in\mathcal E$ be nonnegative functions whose
partial sums are bounded above. If
$\Delta(a_{n+1}-a_n)\leq\varepsilon_n$ for every $n$, there is
$a\in R$ such that
\[
 \Delta(a-a_n)\leq t_n,\qquad
 t_n=\sum_{k\geq n}\varepsilon_k\downarrow0.
\]
The sums of functions are order suprema of their finite partial sums.
\end{lemma}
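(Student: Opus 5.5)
We construct $a$ as a limit built from principal right ideals. Completeness of $R$ in the dimension metric is not among the facts recalled in Section~\ref{sec:background}, so we derive it from the lattice continuity in \eqref{eq:dimension-normal} together with self-injectivity.

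First, the tails. Because the partial sums of the $\varepsilon_k$ are bounded above and $\mathcal E$ is Dedekind complete, $s=\sum_k\varepsilon_k$ exists as an order supremum. The tails are then $t_n=s-\sum_{k<n}\varepsilon_k$. They decrease, and their infimum is $s-\sup_n\sum_{k<n}\varepsilon_k=0$, so $t_n\downarrow0$. By \eqref{eq:rank-ineq}, for $m\ge n$ we get $\Delta(a_m-a_n)\le\sum_{k=n}^{m-1}\varepsilon_k\le t_n$.

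Second, the regions of agreement. For each $n$ set
\[
 J_n=\bigcap_{m\ge n}\ann_r(a_m-a_n).
\]
This is an intersection of principal right ideals, so it is principal. By \eqref{eq:rank-nullity} and \eqref{eq:dimension-normal}, applied to the decreasing finite intersections, $\Delta(J_n)\ge1-t_n$. The key point is that the intersection over $m$ of the annihilators has codimension at most $t_n$. For this we need the finite intersections $\bigcap_{m=n}^{M}\ann_r(a_m-a_n)$ to have dimension at least $1-t_n$. This holds because the codimension of an intersection is at most the dimension of the image of $\bigoplus_m(a_m-a_n)$ restricted suitably. I would rather use the telescoping intersection $\bigcap_{k=n}^{M-1}\ann_r(a_{k+1}-a_k)$, whose codimension is subadditively at most $\sum_{k=n}^{M-1}\varepsilon_k$; it is contained in each $\ann_r(a_m-a_n)$ for $m\le M$. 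So define instead $J_n=\bigcap_{k\ge n}\ann_r(a_{k+1}-a_k)$. Then $J_n$ increases in $n$, $\Delta(J_n)\ge 1-t_n$, and on $J_n$ all $a_m$ with $m\ge n$ agree with $a_n$. The subadditivity $\Delta(R/(I\cap I'))\le\Delta(R/I)+\Delta(R/I')$ follows from modularity and additivity of dimension on complements.

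Third, gluing. Define a right module map $\phi$ on $I=\sum_n J_n$ by $\phi(x)=a_nx$ for $x\in J_n$; this is well defined since the $J_n$ increase and the $a_m$ agree there. By \eqref{eq:dimension-normal}, $\bigvee_nJ_n$ has dimension $1$, so it equals $R$ and $I$ is essential in $R_R$. Right self-injectivity extends $\phi$ to $R_R$, giving $\phi=a\cdot$ for $a=\phi(1)$. Then $a-a_n$ vanishes on $J_n$, so $\ann_r(a-a_n)\supseteq J_n$, and by \eqref{eq:rank-nullity} $\Delta(a-a_n)\le1-\Delta(J_n)\le t_n$.

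The main obstacle is the second step: justifying $\Delta(J_n)\ge1-t_n$, meaning subadditivity of codimension for finite intersections followed by passage to the countable intersection via \eqref{eq:dimension-normal}. I expect the gluing step to be routine once self-injectivity is granted.
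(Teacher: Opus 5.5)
Your proposal is correct and follows the paper's proof essentially step for step. It uses the same telescoping ideals $\bigcap_{k\ge n}\ann_r(a_{k+1}-a_k)$, the bound $\Delta\ge 1-t_n$ from finite intersections followed by \eqref{eq:dimension-normal}, and the gluing by right self-injectivity, with $\Delta(a-a_n)\le 1-\Delta(J_n)$ at the end (your first definition $\bigcap_{m\ge n}\ann_r(a_m-a_n)$ is in fact the same ideal, so the switch only clarified the estimate).
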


\begin{proof}
Put $I_n=\bigcap_{k\geq n}\ann_r(a_{k+1}-a_k)$. These are
principal right ideals, and $I_n\subseteq I_{n+1}$.
Additivity and \eqref{eq:rank-nullity} give
\[
 \Delta\left(\bigcap_{k=n}^m\ann_r(a_{k+1}-a_k)\right)
 \geq1-\sum_{k=n}^m\varepsilon_k.
\]
Taking the filtered meet and using \eqref{eq:dimension-normal}
yields $\Delta(I_n)\geq1-t_n$. Since $t_n\downarrow0$,
\eqref{eq:dimension-normal} gives $\bigvee_n I_n=R$.
Thus $I=\bigcup_n I_n$ is an essential right ideal.

The maps $I_n\to R$, $x\mapsto a_nx$, agree on their common
domains: if $x\in I_n$ and $m\geq n$, then $a_mx=a_nx$.
They therefore define a right-module map $I\to R$.
By right self-injectivity, it extends to an endomorphism of $R_R$,
which is left multiplication by some $a\in R$.
Since $(a-a_n)I_n=0$, \eqref{eq:rank-nullity} gives
\[
 \Delta(a-a_n)\leq1-\Delta(I_n)\leq t_n.
\]
\end{proof}

In particular, $\sum_j a_j$ converges in dimension whenever the
partial sums of $\sum_j\Delta(a_j)$ are bounded above in
$\mathcal E$, and
\begin{equation}\label{eq:tail}
 \Delta\left(\sum_{j>N}a_j\right)
 \leq\sum_{j>N}\Delta(a_j)\downarrow0.
\end{equation}
This applies whenever $a_j\in e_jRe_j$, or merely
$\Delta(a_j)\leq\Delta(e_j)$, for pairwise orthogonal idempotents
$e_j$.

We shall also use the standard dimension criterion for invertibility
in its center-valued form; compare \cite[Remark~4.2(B)]{BS}.

\begin{lemma}\label{lem:unit}
Let $e\in R$ be a nonzero idempotent. An element $a\in eRe$ is a
unit in $eRe$ if and only if $\Delta(a)=\Delta(e)$.
\end{lemma}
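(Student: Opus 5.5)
The plan is to prove the two directions separately. Throughout we work inside the corner $eRe$, which is again a continuous regular ring without nonzero abelian idempotents, but we measure dimensions with the global $\Delta$ of $R$. Note that for $x\in eRe$ the principal right ideal $x(eRe)$ generates $xR$, and $\Delta(xR)$ is what we compare with $\Delta(e)$.

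\emph{Necessity.} Suppose $a$ is a unit of $eRe$ with inverse $b\in eRe$, so that $ab=ba=e$. Then $aR\supseteq abR=eR$, and since $a=ea$ we also have $aR\subseteq eR$. Hence $aR=eR$ and $\Delta(a)=\Delta(e)$. Alternatively, \eqref{eq:rank-ineq} gives both $\Delta(e)=\Delta(ab)\le\Delta(a)$ and $\Delta(a)=\Delta(ea)\le\Delta(e)$.

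\emph{Sufficiency.} Assume $\Delta(a)=\Delta(e)$ with $a\in eRe$.

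1. \textbf{Right inverse.} Since $a=ea$, we have $aR\subseteq eR$, and both are principal right ideals of equal dimension. I claim they are equal. Write $eR=aR\oplus gR$ for some idempotent $g$, which is possible by complementation in $\mathcal L(R)$, since $aR$ is a direct summand of $R$ and hence of $eR$. Additivity gives $\Delta(g)=\Delta(e)-\Delta(a)=0$, so $g=0$ by \eqref{eq:rank-basic}. Thus $aR=eR$, so $e=ac$ for some $c\in R$. Replacing $c$ by $ece$ keeps $a(ece)=ace=e\cdot e=e$, using $ae=a$. So $a$ has a right inverse $b=ece\in eRe$.

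2. \textbf{Left inverse.} Consider the right annihilator of $a$ inside $eR$, namely $\ann_r(a)\cap eR$. Since $ab=e$ with $b\in eRe$, the map $x\mapsto ax$ from $eR$ to $eR$ is surjective. Its kernel $K=\ann_r(a)\cap eR$ is a principal right ideal (an intersection of principal right ideals), and it is a direct summand with $eR\cong aR\oplus K$. The splitting comes from $x\mapsto bax$, and $x-bax\in K$ because $a(x-bax)=ax-ax=0$. Additivity and invariance under isomorphism give $\Delta(e)=\Delta(a)+\Delta(K)$, so $\Delta(K)=0$ and $K=0$. Now $e-ba\in eR$ satisfies $a(e-ba)=a-a=0$, so $e-ba\in K=0$, i.e.\ $ba=e$. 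Hence $a$ is a unit of $eRe$ with inverse $b$.

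The main point needing care is step~2. Direct finiteness (cited in Section~\ref{sec:background}) is the conceptual reason one-sided inverses become two-sided. The dimension argument above makes this explicit: the decomposition $eR=bR\oplus K$ (with $bR\cong aR$ via left multiplication by $a$) plus faithfulness of $\Delta$ forces the kernel to vanish. Equivalently, one could quote that $eRe$ is directly finite, so $ab=e$ implies $ba=e$.
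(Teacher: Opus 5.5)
Your proof is correct. The paper does not prove this lemma: it presents it as the center-valued form of the standard dimension criterion for invertibility and points to a remark in the literature. Your proposal therefore supplies a self-contained derivation, using only faithfulness of $\Delta$, additivity on direct sums and invariance under isomorphism, all of which are recorded in Section~\ref{sec:background}.

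A common variant of your Step~2 goes through the rank--nullity identity \eqref{eq:rank-nullity}. Since $a=ae$, one has $\ann_r(a)=(1-e)R\oplus\bigl(\ann_r(a)\cap eR\bigr)$, hence $\Delta\bigl(\ann_r(a)\cap eR\bigr)=\Delta(e)-\Delta(a)=0$. Then $a(e-ba)=0$ with $e-ba\in eR$ forces $ba=e$. That variant uses the hypothesis $\Delta(a)=\Delta(e)$ a second time. Your splitting $eR=bR\oplus(e-ba)R$ uses only $ab=e$, so it is really a proof that $eRe$ is directly finite. As you note, the paper already has direct finiteness available from Section~\ref{sec:background}, so Step~2 could be replaced by that citation.

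Measuring everything with the global $\Delta$, rather than a normalized dimension on the corner, is the right choice. $\Delta(e)$ may vanish on part of $\Omega$, so normalizing on $eRe$ would require restricting to the central support $c(e)$, and your argument avoids that bookkeeping.
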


\begin{lemma}\label{lem:central-mixing}
Let $(z_i)_{i\in I}$ be pairwise orthogonal central idempotents,
put $z=\bigvee_i z_i$, and let $x_i\in z_iR$.
There is a unique $x\in zR$ such that $z_ix=x_i$ for all $i$.
\end{lemma}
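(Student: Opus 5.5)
Uniqueness comes first and is elementary. Suppose $x,x'\in zR$ satisfy $z_ix=z_ix'$ for all $i$, and put $y=x-x'\in zR$. Then $z_iy=0$ for every $i$. Consider the right ideal $\ann_r(y)$, or rather the central support of $y$: the set of central idempotents $w$ with $wy=0$ is closed under arbitrary joins. Indeed, $yR$ is a principal right ideal, and $wy=0$ means $w$ lies in the left annihilator of $yR$. Since $R$ is regular, $\ann_\ell(yR)=R(1-g)$ for an idempotent $g$ generating $yR$, and central elements of such a left ideal are closed under suprema in the complete Boolean algebra of central idempotents. Concretely, let $w_0$ be the complement of the central support $c(g)$. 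Then $w_0y=0$, and $wy=0$ holds exactly when $w\le w_0$. Hence $z\le w_0$, so $y=zy=0$.

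For existence, I would use right self-injectivity, as in the proof of Lemma~\ref{lem:order-complete}. Let $J=\bigoplus_i z_iR$. This is an internal direct sum of ideals, since the $z_i$ are orthogonal central idempotents. Define a right-module map $\varphi:J\to R$ by $\varphi(z_ir)=x_ir$, extended additively. It is well defined because the sum is direct, and it is right $R$-linear. By right self-injectivity, $\varphi$ extends to $R_R$ and hence is left multiplication by some $a\in R$, so $az_i=x_i$ for all $i$. Since $z_i$ is central, $z_ia=az_i=x_i$. Put $x=za$. Then $x\in zR$, and $z_ix=z_iza=z_ia=x_i$, using $z_iz=z_i$.

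The only point requiring care is the claim in the uniqueness step that the central idempotents annihilating $y$ have a largest element. I would justify it through the central support: for $y\ne0$, take $c=c(g)$ with $yR=gR$. A central idempotent $w$ satisfies $wy=0$ if and only if $wg=0$, which holds if and only if $w\le 1-c(g)$, by minimality of the central support. So $z_i\le 1-c(g)$ for all $i$, hence $z\le 1-c(g)$ and $zy=0$. This step needs $z$ to be the supremum in the complete Boolean algebra of central idempotents, which the background provides.
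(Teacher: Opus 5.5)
Your proof is correct. The existence half is the paper's argument: define a map on $J=\bigoplus_i z_iR$ and extend it by right self-injectivity. You extend to $R_R$ and then multiply by $z$, while the paper extends directly to the injective summand $zR$; the two are interchangeable.

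One small repair in the existence step: directness of the sum only handles the interaction between different summands. That $z_ir\mapsto x_ir$ is well defined on a single summand uses $x_i=x_iz_i$, which holds because $x_i\in z_iR$ and $z_i$ is central. It is cleanest to define the map on $z_iR$ as left multiplication by $x_i$, as the paper does.

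For uniqueness you take a different and more elementary route. The paper first shows that $J$ is essential in $zR$, using $z_i\Delta(y)=0$ for all $i$ together with faithfulness of $\Delta$, and then appeals to nonsingularity of $R$. You instead observe that a central idempotent $w$ satisfies $wy=0$ exactly when $w\le 1-c(g)$, where $yR=gR$. These $w$ are therefore closed under arbitrary joins, so $zy=0$ and $y=zy=0$. This needs only the existence of central supports, not nonsingularity. It also shows that essentiality of $J$ plays no role in the lemma, since existence uses only injectivity.

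Both uniqueness arguments rest on the same Boolean fact: if a family of central idempotents is disjoint from the support of an element, then so is its join. The paper phrases this through the carrier of $\Delta$, and you phrase it through $c(g)$.
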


\begin{proof}
The right ideal $J=\bigoplus_i z_iR$ is essential in $zR$.
Indeed, if $yR\subseteq zR$ is disjoint from $J$, then
$yz_i=0$ for every $i$, so $z_i\Delta(y)=0$ for every $i$.
Since $\bigvee_i z_i=z$ and $y=zy$, faithfulness gives $y=0$.
The map $J\to zR$ whose restriction to $z_iR$ is left
multiplication by $x_i$ extends to an endomorphism of $zR$ by
self-injectivity. This endomorphism is left multiplication by an
$x\in zR$, and $z_ix=xz_i=x_i$. Uniqueness follows because $R$
is nonsingular and $J$ is essential in $zR$.
\end{proof}

For a field $\K$, let
\begin{equation}\label{eq:tower}
 \cA_\K=\varinjlim_k M_{2^k}(\K),\qquad T\longmapsto\diag(T,T),
 \qquad \mathfrak C_\K=\widehat{\cA_\K}^{\,\rho},
\end{equation}
where $\rho(T)=2^{-k}\rank(T)$ on $M_{2^k}(\K)$.
The embeddings preserve rank, and the rank inequalities extend
the ring operations and rank to the metric completion
$\mathfrak C_\K$. This is a nondiscrete irreducible continuous
regular ring, called the standard continuous regular ring over
$\K$; see also \cite[Section~1]{Elek}.
For finite fields, this ring and its unit group are studied by
Carderi and the first author \cite{CT}.

\begin{lemma}\label{lem:embedding}
If $R$ is a unital $\K$-algebra for a field $\K$, there is a unital
$\K$-algebra embedding $\iota:\mathfrak C_\K\hookrightarrow R$
satisfying $\Delta(\iota(T))=\rho(T)1$ for every
$T\in\mathfrak C_\K$.
\end{lemma}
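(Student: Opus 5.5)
We build compatible dyadic systems of matrix units inside $R$, embed the dense tower $\cA_\K$, and then extend the embedding by continuity in dimension, using Lemma~\ref{lem:order-complete}.

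\textbf{Matrix units.} By the splitting facts of Section~\ref{sec:background}, $1$ is a sum of two orthogonal idempotents of dimension $\tfrac12$. Equal dimension gives equivalence, so we obtain $u,v$ with $vu=e_{11}$ and $uv=e_{22}$, and hence a full system of $2\times2$ matrix units $\{e^{(1)}_{ij}\}$ with $\Delta(e^{(1)}_{ii})=\tfrac12$. Inductively, suppose a system $\{e^{(k)}_{ij}\}_{i,j\le 2^k}$ has been built with $\Delta(e^{(k)}_{ii})=2^{-k}$. We split $e^{(k)}_{11}$ into two orthogonal idempotents $f_1,f_2$, each of dimension $2^{-k-1}$, and choose partial isometries $w\in f_2Rf_1$, $w'\in f_1Rf_2$ with $w'w=f_1$ and $ww'=f_2$. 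Transporting these by $e^{(k)}_{i1}(\cdot)e^{(k)}_{1j}$ gives a system of $2^{k+1}\times2^{k+1}$ matrix units refining the previous one. This yields unital $\K$-algebra maps $\iota_k:M_{2^k}(\K)\to R$, defined via $\K\cdot1\subseteq R$, that are compatible with $T\mapsto\diag(T,T)$ under the standard identification $M_{2^{k+1}}(\K)=M_{2^k}(\K)\otimes M_2(\K)$, ordering the refinement accordingly. Together they define $\iota:\cA_\K\to R$.

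\textbf{Rank identity on $\cA_\K$.} For $T\in M_{2^k}(\K)$ of rank $r$, write $T=PDQ$ with $P,Q$ invertible and $D$ the diagonal idempotent of rank $r$. Since $\iota$ is a unital homomorphism, $\iota(P)$ and $\iota(Q)$ are units. Hence
\[
 \Delta(\iota(T))=\Delta(\iota(D))=\sum_{i\le r}\Delta(e^{(k)}_{ii})=r2^{-k}=\rho(T),
\]
using invariance under isomorphism and additivity on orthogonal idempotents. In particular $\iota$ is injective on $\cA_\K$ and satisfies $\Delta(\iota(a)-\iota(b))=\rho(a-b)$, so it is an isometry for the central-valued dimension with constant values.

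\textbf{Extension to the completion.} Let $T\in\mathfrak C_\K$ and choose $T_n\in\cA_\K$ with $\rho(T_{n+1}-T_n)\le2^{-n}$. Lemma~\ref{lem:order-complete}, applied with $\varepsilon_n=2^{-n}\cdot1$, gives a limit $\iota(T)$ in dimension. This limit is independent of the chosen sequence, since interleaving two such sequences gives a single Cauchy sequence and limits are unique. Sums and products pass to the limit by \eqref{eq:product-cont}, and $\K$-linearity passes as well. Finally $\Delta(\iota(T))=\lim\rho(T_n)1=\rho(T)1$ by \eqref{eq:rank-cont}, and this implies injectivity.

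\textbf{Main obstacle.} The step needing most care is the inductive refinement: the new matrix units must restrict to the old ones, i.e.\ the images must respect the connecting maps $T\mapsto\diag(T,T)$. Equivalently, $\iota_{k+1}(\diag(T,T))=\iota_k(T)$, which requires the correct block ordering of indices. The existence of the splittings themselves uses only the dimension facts already recalled.
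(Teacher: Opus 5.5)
Your proposal is correct and follows essentially the same route as the paper. The paper also builds compatible dyadic matrix units from the splitting and equivalence of idempotents, so that dimension equals normalized rank on the tower. It then extends to the completion by applying Lemma~\ref{lem:order-complete} to rapidly Cauchy sequences, and uses \eqref{eq:rank-cont} for the dimension identity and for injectivity. Your version fills in details the paper leaves to ``the standard matrix-unit argument'': the $PDQ$ rank computation, the index ordering, and the independence of the approximating sequence.
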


\begin{proof}
Using the matrix decompositions above, the standard matrix-unit argument
gives compatible unital embeddings of $M_{2^k}(\K)$ into $R$,
with dimension equal to normalized matrix rank; compare \cite{ST}.
Applying Lemma~\ref{lem:order-complete} to
rapidly Cauchy subsequences extends their union to the rank
completion, and \eqref{eq:rank-cont} preserves
the stated dimension identity, which also proves injectivity.
\end{proof}

\section{An identity commutator}\label{sec:identity}

For $n\geq1$, let $V_n=\mathbb Z[t]_{<n}$, with its ordered
monomial basis, and define endomorphisms
\begin{equation}\label{eq:models}
 D_nf=f',\qquad X_nf=tf-([t^{n-1}]f)t^n,\qquad
 E_nf=([t^{n-1}]f)t^{n-1}.
\end{equation}
Here $[t^j]f$ denotes a coefficient. Evaluation on the monomial
basis gives
\begin{equation}\label{eq:defect}
 [D_n,X_n]=I_n-nE_n.
\end{equation}
The map $E_n$ factors through $\mathbb Z$.

\begin{lemma}\label{lem:comparison}
For $a=0,1$, let $R_{a,n}f$ be the remainder of $f$ modulo
$(t-a)^n$, regarded as a polynomial in $t$. The map
\[
 \Psi_n:V_{2n}\longrightarrow V_n\oplus V_n,\qquad
 f\longmapsto(R_{0,n}f,R_{1,n}f)
\]
is an isomorphism of free abelian groups. Both maps
\begin{equation}\label{eq:comparison}
 \Psi_nD_{2n}-(D_n\oplus D_n)\Psi_n,
 \qquad \Psi_nX_{2n}-(X_n\oplus X_n)\Psi_n
\end{equation}
factor through $\mathbb Z^2$.
\end{lemma}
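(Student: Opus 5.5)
The plan is to check both assertions by direct computation with polynomial division over $\mathbb Z$. Division by a \emph{monic} polynomial works over $\mathbb Z$, so the remainders $R_{a,n}f$ are well defined integral polynomials of degree $<n$, and $\Psi_n$ is a $\mathbb Z$-linear map $V_{2n}\to V_n\oplus V_n$.

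\emph{Bijectivity.} I will use the Chinese remainder theorem over $\mathbb Z[t]$. First I show that $t^n$ and $(t-1)^n$ are comaximal in $\mathbb Z[t]$. Write $1=t-(t-1)$ and raise it to the power $2n-1$. By the binomial theorem every term $\pm\binom{2n-1}{j}t^j(t-1)^{2n-1-j}$ has integer coefficients. Each term is divisible either by $t^n$ (when $j\ge n$) or by $(t-1)^n$ (when $j\le n-1$). Hence the ideal $(t^n)+((t-1)^n)$ is all of $\mathbb Z[t]$, and
\[
\mathbb Z[t]/(t^n(t-1)^n)\cong \mathbb Z[t]/(t^n)\times\mathbb Z[t]/((t-1)^n).
\]
The polynomial $t^n(t-1)^n$ is monic of degree $2n$, so $V_{2n}$ is a system of representatives for the left side. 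Likewise $V_n$ is a system of representatives for each factor on the right. Under these identifications the isomorphism is exactly $\Psi_n$.

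\emph{The $D$-comparison.} Fix $a\in\{0,1\}$ and write $f=q(t-a)^n+r$ with $r=R_{a,n}f$ and $q\in\mathbb Z[t]$. Differentiating gives
\[
f'=q'(t-a)^n+nq(t-a)^{n-1}+r'.
\]
Reducing modulo $(t-a)^n$ and using $\deg r'<n$, I expect
\[
R_{a,n}(f')-D_nR_{a,n}f=n\,q(a)\,(t-a)^{n-1}.
\]
The quotient $q$, and hence $q(a)$, depends $\mathbb Z$-linearly on $f$. So the $a$-component of the first map in \eqref{eq:comparison} is $f\mapsto n\,q(a)\,(t-a)^{n-1}$, which factors through $\mathbb Z$. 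The two components together factor through $\mathbb Z^2$.

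\emph{The $X$-comparison.} Let $c=[t^{2n-1}]f$. Then $R_{a,n}(X_{2n}f)=R_{a,n}(tf)-c\,R_{a,n}(t^{2n})$ and $R_{a,n}(tf)=R_{a,n}(tr)$. Since $\deg(tr)\le n$, one reduction step gives $R_{a,n}(tr)=tr-([t^{n-1}]r)(t-a)^n$. Comparing with $X_nr=tr-([t^{n-1}]r)t^n$, the $a$-component of the error is
\[
([t^{n-1}]r)\bigl(t^n-(t-a)^n\bigr)-c\,R_{a,n}(t^{2n}).
\]
For $a=0$ both terms vanish: the first because $t^n-t^n=0$, the second because $t^n$ divides $t^{2n}$. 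For $a=1$ the error is a combination of the two integer functionals $f\mapsto[t^{n-1}]R_{1,n}f$ and $f\mapsto c$. Hence the second map in \eqref{eq:comparison} also factors through $\mathbb Z^2$.

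The only delicate point is this bookkeeping for $X$. A naive count uses one functional for each of the two truncation terms on each side, which gives factorization only through $\mathbb Z^3$ or $\mathbb Z^4$. The observation that the $a=0$ component vanishes identically is what brings the count down to $\mathbb Z^2$.
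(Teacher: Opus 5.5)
Your proposal is correct and follows the paper's proof essentially step for step. It uses the Chinese remainder theorem with the monic modulus $t^n(t-1)^n$ for bijectivity, the identity $R_{a,n}(f')-D_nR_{a,n}f=nq(a)(t-a)^{n-1}$ for $D$, and the observation that the $a=0$ component of the $X$-error vanishes while the $a=1$ component factors through $f\mapsto([t^{2n-1}]f,[t^{n-1}]R_{1,n}f)$. The only cosmetic difference is that you prove comaximality of $t^n$ and $(t-1)^n$ by expanding $(t-(t-1))^{2n-1}$, whereas the paper deduces it from comaximality of $(t)$ and $(t-1)$.
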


\begin{proof}
The ideals $(t^n)$ and $((t-1)^n)$ are comaximal in
$\mathbb Z[t]$, since $(t)$ and $(t-1)$ are comaximal. The
Chinese remainder theorem, with the monic polynomial
$t^n(t-1)^n$, therefore proves that $\Psi_n$ is invertible over
$\mathbb Z$.

Write $f=r_a+(t-a)^nq_a$, where $r_a=R_{a,n}f$.
Differentiation and reduction modulo $(t-a)^n$ give
\[
 R_{a,n}(f')-r_a'=nq_a(a)(t-a)^{n-1}.
\]
Each component of the first map in \eqref{eq:comparison} thus
factors through $\mathbb Z$.

For multiplication, the first component satisfies
$R_{0,n}X_{2n}=X_nR_{0,n}$. Put
$\alpha=[t^{2n-1}]f$ and $\beta=[t^{n-1}]R_{1,n}f$.
Since
\[
 R_{1,n}(tR_{1,n}f)=tR_{1,n}f-\beta(t-1)^n,
\]
the second component is
\[
 R_{1,n}X_{2n}f-X_nR_{1,n}f
 =\beta\bigl(t^n-(t-1)^n\bigr)
   -\alpha R_{1,n}(t^{2n}).
\]
It factors through the map $f\mapsto(\alpha,\beta)$ to
$\mathbb Z^2$.
\end{proof}

\begin{theorem}\label{thm:identity}
There are $A,B\in R$ with $[A,B]=1$ such that $B$ is a unit
and $h(B)$ is a unit for every $h\in\mathbb Z[t]$ with $h(0)=1$.
\end{theorem}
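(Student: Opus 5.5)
The plan is to realize $A$ and $B$ as limits in dimension of the integer models $D_{2^k}$ and $X_{2^k}$ of \eqref{eq:models}. This gives $A=\lim D$ and $B=\lim X$, so that $[A,B]=\lim[D_{2^k},X_{2^k}]$.

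\emph{Embedding the models.} Since $\mathbb Z\to R$ is always available, I would first fix compatible systems of matrix units. Split $1$ into $2$ equivalent orthogonal idempotents, then each of them into $2$, and so on, all of equal dimension, using the matrix decompositions of Section~\ref{sec:background}. This gives unital ring maps $\iota_k:M_{2^k}(\mathbb Z)\to R$ with $\Delta(\iota_k(T))\le 2^{-k}\rank_{\mathbb Q}(T)$, and with $\iota_{k+1}(\diag(T,T'))$ compatible with $\iota_k$ on each half. Integer matrices suffice, because $\mathbb Z$ maps into the center. The point is to choose the identification $V_{2^k}\cong\mathbb Z^{2^k}$ recursively so that these maps are compatible. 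Identify $V_1=\mathbb Z$, and identify $V_{2^{k+1}}$ with $V_{2^k}\oplus V_{2^k}$ through the isomorphism $\Psi_{2^k}$ of Lemma~\ref{lem:comparison}. This is legitimate because $\Psi_{2^k}$ is invertible over $\mathbb Z$. Transport $D_{2^k}$, $X_{2^k}$, $E_{2^k}$ along these identifications to integer matrices, and set $A_k=\iota_k(D_{2^k})$ and $B_k=\iota_k(X_{2^k})$. By construction, $\iota_{k+1}\bigl((D_{2^k}\oplus D_{2^k})\bigr)=A_k$, and similarly for $X$.

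\emph{Convergence.} Lemma~\ref{lem:comparison} says that $\Psi D_{2^{k+1}}-(D\oplus D)\Psi$ and the analogous $X$-difference factor through $\mathbb Z^2$, so they have rank at most $2$. After conjugating by $\Psi$, this gives $\Delta(A_{k+1}-A_k)\le 2\cdot2^{-(k+1)}$, and the same bound for $B$. These bounds are summable, so Lemma~\ref{lem:order-complete} produces $A,B\in R$ with $A_k\to A$ and $B_k\to B$ in dimension. By \eqref{eq:defect}, $[A_k,B_k]-1=-2^k\iota_k(E_{2^k})$, which has dimension at most $2^{-k}$. Since products converge in dimension by \eqref{eq:product-cont}, we get $[A,B]=1$.

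\emph{Units.} Let $h\in\mathbb Z[t]$ with $h(0)=1$, and also treat $h=t$, i.e.\ $B$ itself. For $h(0)=1$, the matrix $h(X_n)=1+X_n g(X_n)$ is unipotent because $X_n$ is nilpotent. Hence $h(X_n)$ is injective, so $\Delta(h(B_k))=1$. Since $\Delta(h(B)-h(B_k))\to0$, \eqref{eq:rank-cont} gives $\Delta(h(B))=1$, and Lemma~\ref{lem:unit} shows $h(B)$ is a unit. For $B$ itself, $X_n$ has kernel $\mathbb Z t^{n-1}$ of rank $1$. So $\Delta(\ann_r(B_k))\le 2^{-k}$, i.e.\ $\Delta(B_k)\ge1-2^{-k}$. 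Passing to the limit with \eqref{eq:rank-cont} gives $\Delta(B)=1$, and $B$ is a unit by Lemma~\ref{lem:unit}.

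\emph{Main obstacle.} I expect the main difficulty to be the bookkeeping in the first step. The recursive basis changes $\Psi_{2^k}$ must be composed consistently with the chosen matrix units, so that $\iota_{k+1}$ restricted to block-diagonal matrices really agrees with $\iota_k$ on both halves. The rank estimates must also be computed in $R$ via \eqref{eq:rank-ineq} and not merely over $\mathbb Q$. I would handle this by writing $\iota_{k+1}(T)=\sum T_{ij}e^{(k+1)}_{ij}$ and checking directly that a rank-$r$ integer matrix factors as a product through $r$ of the minimal idempotents, giving $\Delta\le r2^{-k}$.
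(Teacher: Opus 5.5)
Your proposal is correct and is essentially the paper's proof. Your recursive identification of $V_{2^{k+1}}$ with $V_{2^k}\oplus V_{2^k}$ via $\Psi_{2^k}$ is exactly the paper's basis change $C_{k+1}=\Phi_{n_k}\iota_{n_k}(C_k)$, and your Cauchy bounds, defect bound and unit arguments match the paper's.

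One small point to tighten. Statements over $\mathbb Z$ such as ``injective'' or ``kernel of rank $1$'' do not by themselves control dimension in $R$, which may have positive characteristic. Argue instead that $h(X_n)$ is unipotent, hence invertible over $\mathbb Z$, so $h(B_k)$ is a unit in $R$. For $B_k$, use that in monomial coordinates $X_n$ maps $n-1$ diagonal matrix-unit summands isomorphically onto the other $n-1$. This gives $\Delta(B_k)=(1-2^{-k})\mathbf 1$ after conjugation by a unit, as in the paper.
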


\begin{proof}
Choose compatible dyadic matrix units in $R$, with the inclusions
$\iota_n(T)=\diag(T,T)$ for integer matrices. In a unital
$n\times n$ matrix decomposition, each diagonal matrix unit has
dimension $n^{-1}\mathbf1$. An integer matrix that factors
through $\mathbb Z^r$ therefore has dimension at most
$(r/n)\mathbf1$ when interpreted in these matrix units.

Write $\Phi_n=\Psi_n^{-1}$. Set $n_k=2^k$, $C_0=(1)$, and,
in the monomial coordinates, define
\[
 C_{k+1}=\Phi_{n_k}\iota_{n_k}(C_k),\qquad
 A_k=C_k^{-1}D_{n_k}C_k,\qquad
 B_k=C_k^{-1}X_{n_k}C_k.
\]
The matrices $C_k$ are invertible over $\mathbb Z$, so their
images in $R$ are units. With $n=n_k$,
\[
 A_{k+1}-\iota_n(A_k)
 =\iota_n(C_k)^{-1}
 \bigl(\Phi_n^{-1}D_{2n}\Phi_n-\iota_n(D_n)\bigr)
 \iota_n(C_k),
\]
and the same identity holds with $B$ and $X$ in place of $A$
and $D$. Lemma~\ref{lem:comparison}, invariance of dimension
under multiplication by units, and \eqref{eq:defect} give
\begin{gather}
 \Delta(A_{k+1}-A_k),\ \Delta(B_{k+1}-B_k)
       \leq2^{-k}\mathbf1,\label{eq:cauchy}\\
 \Delta([A_k,B_k]-1)\leq2^{-k}\mathbf1.
 \label{eq:defect-limit}
\end{gather}
Here all differences use the chosen compatible matrix units.
Lemma~\ref{lem:order-complete} gives limits $A,B\in R$.
Continuity of multiplication and \eqref{eq:defect-limit} imply
$[A,B]=1$.

The matrix $X_{2^k}$ shifts $2^k-1$ equivalent diagonal
summands isomorphically onto the other $2^k-1$ summands, so
\[
 \Delta(B_k)=(1-2^{-k})\mathbf1.
\]
Consequently $\Delta(B)=\mathbf1$, and $B$ is a unit by
Lemma~\ref{lem:unit}. If $h(0)=1$, then $h(B_k)$ is a unit
because $B_k$ is nilpotent. Polynomial continuity gives
$\Delta(h(B))=\mathbf1$, and Lemma~\ref{lem:unit} again
proves invertibility.
\end{proof}

Define monic polynomials in $\mathbb Z[t]$ by
\begin{equation}\label{eq:polynomials}
 p_1(t)=t,\qquad p_{j+1}(t)=1+\prod_{i=1}^{j}p_i(t).
\end{equation}
For $i<j$, one has $p_j\equiv1\pmod{p_i}$, so any two
distinct members generate the unit ideal in $\mathbb Z[t]$.
Moreover, $p_j(0)=1$ for every $j\geq2$.

\begin{corollary}\label{cor:corner-pair}
For every nonzero idempotent $f\in R$ there are $P,Q\in fRf$
such that $[P,Q]=f$ and $p_j(P)$ is a unit in $fRf$ for every
$j\geq1$. Polynomial evaluations use the identity $f$.
\end{corollary}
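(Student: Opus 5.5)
Since the corner $fRf$ is again a nonzero continuous regular ring with no nonzero abelian idempotents (Section~\ref{sec:background}), I will apply Theorem~\ref{thm:identity} inside $fRf$. This gives $A,B\in fRf$ with $[A,B]=f$, where $B$ is a unit of $fRf$ and $h(B)$ is a unit of $fRf$ for every $h\in\mathbb Z[t]$ with $h(0)=1$. Here polynomials are evaluated with constant term multiplied by $f$.

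The first idea would be to take $P=B$, but then the sign is wrong: $[B,A]=-f$. I will therefore set $P=B$ and $Q=-A$, so that $[P,Q]=[B,-A]=[A,B]=f$. It remains to check that each $p_j(P)$ is a unit in $fRf$. For $j=1$ we have $p_1(P)=P=B$, which is a unit by Theorem~\ref{thm:identity}. For $j\geq2$, the polynomial $p_j\in\mathbb Z[t]$ satisfies $p_j(0)=1$, as noted after \eqref{eq:polynomials}. Hence $p_j(B)$ is a unit by the second clause of Theorem~\ref{thm:identity}.

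The only point that needs care is that Theorem~\ref{thm:identity} really applies in the corner. This holds because the standing hypotheses of the paper pass to nonzero corners, as stated in Section~\ref{sec:background}. The unit $1$ of that theorem is then $f$, and "unit" means unit in $fRf$, which matches the convention that polynomial evaluations use the identity $f$. I do not expect any genuine obstacle beyond this bookkeeping.
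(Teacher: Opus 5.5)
Your proposal is correct and is the paper's proof: apply Theorem~\ref{thm:identity} in the corner $fRf$ and set $P=B$, $Q=-A$. Then $p_1(P)=B$ is a unit, and $p_j(0)=1$ for $j\geq2$ makes $p_j(P)$ a unit by the second clause of that theorem.
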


\begin{proof}
Apply Theorem~\ref{thm:identity} in the corner $fRf$ and set
$P=B$ and $Q=-A$.
\end{proof}

\begin{remark}
If $R$ has prime characteristic $p$, its matrix units give a
unital copy of $M_p(\mathbb F_p)$, in which
$[D_p,X_p]=I_p$.
\end{remark}

\section{Every element is a commutator}\label{sec:commutators}

We first choose orthogonal corners on which the given element has zero
compression, apart from one corner on which its compression is central.

\begin{lemma}\label{lem:zero-compression}
If $a\notin Z(R)$, there is a nonzero idempotent $e\in R$ with $eae=0$.
\end{lemma}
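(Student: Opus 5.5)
The plan is to locate, from the non-centrality of $a$, two orthogonal idempotents $p,q$ such that the compression $paq$ is an isomorphism from $qR$ onto $pR$. Then $e$ will be a rank-one-type idempotent of the form $e=p+y$ with $y\in qRp$, chosen to kill $pae$. In $2\times2$ block form with respect to $p,q$ this is the idempotent $\bigl(\begin{smallmatrix}1&0\\ x&0\end{smallmatrix}\bigr)$ with $x=-\beta^{-1}\alpha$, where $\alpha=pap$ and $\beta=paq$.

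\emph{Step 1: $a$ fails to commute with some idempotent.} Suppose $a$ commuted with every idempotent of $R$. Use a unital $2\times2$ matrix decomposition $R\cong M_2(S)$ with $S=fRf$, which exists by the splitting discussed in Section~\ref{sec:background}. Commuting with the diagonal matrix units forces $a$ to be diagonal. Commuting with the off-diagonal products of matrix units, through idempotents such as $e_{11}+e_{12}$, forces $a=\diag(b,b)$. Commuting with the idempotents $\bigl(\begin{smallmatrix}1&y\\0&0\end{smallmatrix}\bigr)$ for $y\in S$ gives $by=yb$. Hence $b\in Z(S)$ and $a\in Z(R)$, a contradiction. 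So there is an idempotent $g$ with $ga\neq ag$. Then $ga(1-g)\neq0$ or $(1-g)ag\neq0$, and by symmetry (replace $g$ by $1-g$) we may assume $\beta_0=ga(1-g)\neq0$.

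\emph{Step 2: an invertible off-diagonal block.} By regularity, $\ann_r(\beta_0)\cap(1-g)R$ is a direct summand of $(1-g)R$. Choose a complementary idempotent $q\leq 1-g$, so that $x\mapsto\beta_0x$ is injective on $qR$; here $q\neq0$ because $\beta_0\neq0$. The image $\beta_0qR$ is a principal right ideal contained in $gR$, so it equals $pR$ for an idempotent $p\leq g$. Then $p$ and $q$ are orthogonal nonzero idempotents, and $paq=\beta_0 q$ restricts to an isomorphism $qR\to pR$. Consequently there is $u\in qRp$ with $(paq)u=p$ (and $u(paq)=q$).

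\emph{Step 3: the idempotent.} Put $y=-u\,pap\in qRp$ and $e=p+y$. Since $py=0$ and $yp=y$, we get $y^2=0$ and $e^2=p+y=e$, and $e\neq0$ because $pe=p$. Moreover $pae=pap+paq\,y=pap-(paq)u\,pap=0$. Since $ep=e$, it follows that $eae=e\,p\,a\,e=0$.

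The only step needing care is Step~1, namely that commuting with all idempotents forces centrality. This is where the absence of abelian idempotents enters, through the existence of a $2\times2$ decomposition. Steps~2 and~3 are routine consequences of regularity.
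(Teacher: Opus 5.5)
Your proof is correct. Step~1 matches the paper's reduction; the paper phrases it as: off-diagonal entries are differences of idempotents, and diagonal entries are products of off-diagonal ones. Step~2 uses the same regularity device as the paper, cutting down to a summand on which an off-diagonal block of $a$ is injective.

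The two routes differ in how $e$ is produced. The paper needs only injectivity. It takes $b=(1-p)ap\neq0$ and a summand $U=qR$ with $q\leq p$ on which $b$ is injective. It observes $U\cap aU=0$, since $au\in U\subseteq pR$ forces $bu=(1-p)au=0$. It then lets $e$ be the projection onto $U$ along $aU$ and a complement, using that the finitely generated right ideal $U\oplus aU$ is a direct summand. Thus $eR=U$ and $e$ kills $aU$, so $eae=0$.

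You instead also shrink the target so that $paq\colon qR\to pR$ becomes an isomorphism. You invert it and write down the explicit graph idempotent $e=p-u\,pap$. This $e$ has $Re=Rp$ rather than a prescribed image, and $eae=0$ reduces to $pae=0$.

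Your version gives a concrete formula and a transparent $2\times2$ block computation. Its cost is the extra work of building $p$ and the inverse $u$. The paper's version is shorter and coordinate-free.
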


\begin{proof}
There is an idempotent $p$ with $b=(1-p)ap\ne0$. Otherwise, applying
the assumed vanishing to both $p$ and $1-p$ shows that $a$ commutes
with every idempotent. In a unital $2\times2$ matrix decomposition,
every off-diagonal entry is a difference of idempotents, and every
diagonal entry is a product of two off-diagonal entries. Thus $a$
would commute with every element of $R$, a contradiction.

By regularity choose $c$ with $bcb=b$. Set $b'=pc(1-p)$ and $q=b'b$.
Then $bb'b=b$, and $q$ is a nonzero idempotent in $pRp$.
Left multiplication by $b$ is injective on $U=qR$, since
$bu=0$ implies $u=qu=b'bu=0$. If $au\in U$ for $u\in U$, then
$0=(1-p)au=bu$, hence $u=0$. Consequently $U\cap aU=0$.

The finitely generated right ideal $U\oplus aU$ is a direct summand
of $R_R$. Project onto $U$ along $aU$ and a complementary right ideal.
This projection is left multiplication by a nonzero idempotent $e$,
and it satisfies $eae=0$.
\end{proof}

\begin{lemma}\label{lem:boolean-refinement}
Let $(w_\alpha)_{\alpha\in A}$ be a family in a complete Boolean algebra,
and let $m\geq1$. Suppose that the meet of any $m+1$ members with
distinct indices is zero. There are elements $v_{\alpha,k}$,
$1\leq k\leq m$, such that
\[
 w_\alpha=\bigvee_{k=1}^{m}v_{\alpha,k},
\]
the pieces in this decomposition are disjoint, and for each $k$ the
family $(v_{\alpha,k})_{\alpha\in A}$ is disjoint.
\end{lemma}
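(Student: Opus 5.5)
We pass to the Stone space. Let $\Omega$ be the Stone space of the complete Boolean algebra $\mathcal B$, and identify each $w_\alpha$ with a clopen set $W_\alpha\subseteq\Omega$. The hypothesis says that every point $\omega\in\Omega$ lies in at most $m$ of the sets $W_\alpha$. The idea is to assign to each point of $W_\alpha$ a "colour" $k\in\{1,\dots,m\}$ such that distinct $\alpha$ containing the same point receive distinct colours. We want this assignment to be locally constant, and we want the resulting sets, after taking closures of unions, to remain disjoint. Completeness of $\mathcal B$ (extremal disconnectedness of $\Omega$) is what lets us take the arbitrary joins needed.

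\textbf{Step 1 (reduction by the exact multiplicity).} For each finite set $F\subseteq A$, put
\[
 u_F=\bigwedge_{\alpha\in F}w_\alpha\wedge\bigwedge_{\beta\notin F}\neg w_\beta,
\]
where the infinite meet is taken in the complete Boolean algebra $\mathcal B$. The elements $u_F$ with $|F|\le m$ are pairwise disjoint: if $F\neq F'$, then some index lies in one set but not the other, so $u_F\wedge u_{F'}=0$.

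The key claim is that, for every $\alpha$,
\[
 w_\alpha=\bigvee_{F\ni\alpha,\ |F|\le m}u_F.
\]
This is the main obstacle, since in a general complete Boolean algebra infinite distributivity fails and a point need not lie in the interior of its "exact set".

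To prove the claim, suppose $0\neq x\le w_\alpha$ is disjoint from every such $u_F$. Choose a maximal finite set $F\ni\alpha$ with $|F|\le m$ and $x_F=x\wedge\bigwedge_{\gamma\in F}w_\gamma\neq0$. Such a maximal $F$ exists because $F=\{\alpha\}$ qualifies and sizes are bounded by $m$ (by the hypothesis, any set of $m+1$ indices has zero meet). By maximality, $x_F\wedge w_\beta=0$ for every $\beta\notin F$. Hence $x_F\le\neg w_\beta$ for all $\beta\notin F$, so $x_F\le u_F$, contradicting $x\wedge u_F=0$. Thus the claim holds. Note that only the existence of meets is used here, not distributivity.

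\textbf{Step 2 (colouring).} For each $F$ with $|F|\le m$, fix an injection $\sigma_F\colon F\to\{1,\dots,m\}$. Define
\[
 v_{\alpha,k}=\bigvee\{u_F:\ \alpha\in F,\ \sigma_F(\alpha)=k\}.
\]

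\textbf{Step 3 (verification).} The identity $w_\alpha=\bigvee_k v_{\alpha,k}$ follows from Step 1. For disjointness we use that, in any complete Boolean algebra, $y\wedge\bigvee_i x_i=\bigvee_i(y\wedge x_i)$ holds, since Boolean algebras are Heyting algebras. Combined with the pairwise disjointness of the $u_F$, this gives, for $(\alpha,k)\neq(\beta,l)$,
\[
 v_{\alpha,k}\wedge v_{\beta,l}=\bigvee\{u_F\wedge u_{F'}\}.
\]
A term can be nonzero only when $F=F'$. In that case, if $\alpha=\beta$ then $k=\sigma_F(\alpha)=l$, which is excluded. If $\alpha\neq\beta$ and $k=l$, then injectivity of $\sigma_F$ is violated. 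Hence for each $\alpha$ the pieces $v_{\alpha,k}$, $1\le k\le m$, are disjoint, and for each $k$ the family $(v_{\alpha,k})_{\alpha\in A}$ is disjoint.
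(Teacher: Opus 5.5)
Your proof is correct, but it takes a genuinely different route from the paper.

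The paper well-orders $A$ and colours greedily by transfinite recursion. It puts $U_{\alpha,k}=\bigvee_{\beta<\alpha}v_{\beta,k}$ and $v_{\alpha,k}=w_\alpha\wedge\neg U_{\alpha,k}\wedge\bigwedge_{\ell<k}U_{\alpha,\ell}$. Disjointness is then automatic. The only work is to show that the uncoloured remainder $w_\alpha\wedge\bigwedge_k U_{\alpha,k}$ vanishes. Distributing this finite meet over the joins produces distinct earlier indices $\beta_1,\dots,\beta_m$ with $w_\alpha\wedge w_{\beta_1}\wedge\cdots\wedge w_{\beta_m}\neq0$, which contradicts the hypothesis.

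You avoid the recursion. You cut each $w_\alpha$ into the intrinsic, pairwise disjoint pieces $u_F$, indexed by the exact set of indices present. You prove the covering $w_\alpha=\bigvee_{F\ni\alpha}u_F$ by the maximal-$F$ argument, and then colour each $u_F$ independently through an injection $\sigma_F$.

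The two ingredients thus swap roles. In the paper, covering needs infinite distributivity and disjointness is free. In your argument, covering needs only greatest lower bounds, and disjointness uses distributivity together with $u_F\wedge u_{F'}=0$. Your version is more canonical: the partition $(u_F)$ involves no choices (only the labellings $\sigma_F$ do) and shows the local structure explicitly. The price is the infinite meets $\bigwedge_{\beta\notin F}\neg w_\beta$ and a separate covering argument. The paper's recursion is shorter and never has to identify exact multiplicities.

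Two presentational points. When $|F|=m$, the vanishing of $x_F\wedge w_\beta$ for $\beta\notin F$ comes from the hypothesis, not from maximality. It is cleanest to take $F$ maximal among all finite sets containing $\alpha$ with $x_F\neq0$, since these automatically have at most $m$ elements. Also, make explicit the passage from ``no nonzero $x\le w_\alpha$ is disjoint from every $u_F$'' to the equality, by taking $x=w_\alpha\wedge\neg\bigvee_{F\ni\alpha}u_F$. The Stone-space paragraph is motivation only and plays no role in the argument.
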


\begin{proof}
Well-order $A$. Having defined the pieces at indices $\beta<\alpha$,
put
\[
 U_{\alpha,k}=\bigvee_{\beta<\alpha}v_{\beta,k},\qquad
 v_{\alpha,k}=w_\alpha\wedge\neg U_{\alpha,k}
                   \wedge\bigwedge_{\ell<k}U_{\alpha,\ell}.
\]
For fixed $\alpha$, these pieces are disjoint, and each piece is
disjoint from all earlier pieces with the same second index. Their
join leaves only
\[
 w_\alpha\wedge\bigwedge_{k=1}^{m}U_{\alpha,k}.
\]
Finite meets distribute over arbitrary joins in a complete Boolean
algebra. If this remainder were nonzero, there would be indices
$\beta_1,\ldots,\beta_m<\alpha$ with
\[
 w_\alpha\wedge v_{\beta_1,1}\wedge\cdots\wedge v_{\beta_m,m}\ne0.
\]
The indices $\beta_k$ must be distinct, because different pieces
belonging to one index are disjoint. Since $v_{\beta_k,k}\leq
w_{\beta_k}$, this contradicts the hypothesis.
\end{proof}

\begin{lemma}\label{lem:zero-partition}
For every $a\in R$ there is a finite or countable family of pairwise
orthogonal nonzero idempotents $(e_i)_{i\in I}$, possibly empty, such
that, with $g=\sum_i e_i$ and $f=1-g$,
\[
 e_i a e_i=0\quad(i\in I),\qquad faf\in Z(fRf).
\]
The series defining $g$ converges in the sense of
Lemma~\ref{lem:order-complete}.
\end{lemma}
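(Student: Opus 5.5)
The plan is to build the family by a maximality argument, then localize centrally and regroup with Lemma~\ref{lem:boolean-refinement} to get a countable family. First take, by Zorn's lemma, a maximal family $(e_\lambda)_{\lambda\in\Lambda}$ of pairwise orthogonal nonzero idempotents with $e_\lambda ae_\lambda=0$. Here orthogonality is understood as independence of the right ideals $e_\lambda R$, in the sense that the associated projections onto them are mutually orthogonal. Chains are handled by unions, so Zorn applies. Let $g$ be the idempotent whose right ideal is the join $\bigvee_\lambda e_\lambda R$, chosen so that each $e_\lambda\le g$; let $f=1-g$.

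Next I would show $faf\in Z(fRf)$. If not, Lemma~\ref{lem:zero-compression} applied in the corner $fRf$, which again satisfies the standing hypotheses, gives a nonzero idempotent $e'\le f$ with $e'ae'=e'(faf)e'=0$. Then $e'$ is orthogonal to every $e_\lambda$, because $e_\lambda\le g$ and $e'\le f=1-g$. This contradicts maximality. The delicate point here is arranging $g$ as a genuine idempotent with $e_\lambda\le g$ when the family is infinite. I would obtain it as the limit of finite partial sums once the family is made countable. Alternatively, I would use the fact that a maximal orthogonal family with sum $g$ in the sense of Lemma~\ref{lem:order-complete} can be built directly by transfinite recursion, maintaining the invariant that partial sums are idempotents.

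The main obstacle is countability. An arbitrary maximal family may be uncountable, and dimension sums only converge in $\mathcal E$, not via a countable index set. My plan is a central regrouping. For each $n\ge1$ and each $\lambda$, decompose by central idempotents according to where $\Delta(e_\lambda)>2^{-n}$, so that on the relevant central piece each point of $\Omega$ sees at most $2^n$ indices. Concretely, let $w_{\lambda,n}$ be the carrier of $(\Delta(e_\lambda)-2^{-n})_+$, minus earlier levels. Since $\sum_\lambda\Delta(e_\lambda)\le1$ pointwise, any $2^n+1$ distinct $w_{\lambda,n}$ have zero meet. Lemma~\ref{lem:boolean-refinement} with $m=2^n$ splits them into $m$ disjoint families $(v_{\lambda,n,k})_\lambda$. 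For each pair $(n,k)$, Lemma~\ref{lem:central-mixing} glues the pieces $v_{\lambda,n,k}e_\lambda$ into a single idempotent $e_{n,k}$. This gluing uses the uniqueness in Lemma~\ref{lem:central-mixing} to check idempotence, orthogonality and $e_{n,k}ae_{n,k}=0$ centrally piece by piece, since these identities hold after multiplying by each $v_{\lambda,n,k}$. The remaining central part, where $\Delta(e_\lambda)$ is not captured at any level, vanishes because the carrier of $\Delta(e_\lambda)$ is $c(e_\lambda)$.

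The result is a countable orthogonal family $(e_{n,k})$, discarding zeros, with total dimension at most $1$. Lemma~\ref{lem:order-complete} then makes $g=\sum e_{n,k}$ converge, and the partial sums are idempotents increasing in $\le$, so the limit is an idempotent. Finally, I would verify that $f=1-g$ annihilates each original $e_\lambda$ centrally, so $f$ is still orthogonal to the whole family. The maximality argument of the previous paragraph, carried out with this $g$, then gives $faf\in Z(fRf)$.
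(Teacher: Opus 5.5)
Your proposal is correct and follows essentially the same route as the paper: a Zorn-maximal orthogonal family with zero compressions, then central regrouping by the levels $\Delta(e_\lambda)>2^{-n}$ via Lemma~\ref{lem:boolean-refinement} and Lemma~\ref{lem:central-mixing} into countably many orthogonal idempotents, then $g$ from Lemma~\ref{lem:order-complete}, a piecewise check that $f=1-g$ is orthogonal to every original idempotent, and finally Lemma~\ref{lem:zero-compression} in $fRf$ to contradict maximality. In the Zorn step, take orthogonality to mean $e_\lambda e_\mu=0$ rather than mere independence of the right ideals, since that is what makes the glued $e_{n,k}$ pairwise orthogonal.
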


\begin{proof}
By Zorn's lemma choose a maximal orthogonal family $(u_\alpha)_{\alpha\in A}$
of nonzero idempotents satisfying $u_\alpha a u_\alpha=0$.
We regroup central portions of these idempotents into a countable
family.

Put
\[
 z_{\alpha,0}=0,\qquad
 z_{\alpha,n}=\operatorname{car}\bigl((\Delta(u_\alpha)-2^{-n})_+\bigr),
 \qquad w_{\alpha,n}=z_{\alpha,n}(1-z_{\alpha,n-1})\quad(n\geq1).
\]
For each $\alpha$, the $z_{\alpha,n}$ increase to the central support
of $u_\alpha$, so the $w_{\alpha,n}$ form a disjoint central partition
of that support. On $w_{\alpha,n}$ the function $\Delta(u_\alpha)$
is at least $2^{-n}$. Since
\[
 \sum_{\alpha\in F}\Delta(u_\alpha)\leq1
 \qquad\text{for every finite }F\subseteq A,
\]
the meet of any $2^n+1$ distinct members of
$(w_{\alpha,n})_{\alpha\in A}$ is zero.

Apply Lemma~\ref{lem:boolean-refinement}, for each $n$, to write
\[
 w_{\alpha,n}=\bigvee_{k=1}^{2^n}v_{\alpha,n,k},
\]
where the pieces for each $\alpha$ are disjoint and, for fixed $n,k$,
the central idempotents $v_{\alpha,n,k}$ are pairwise orthogonal as
$\alpha$ varies. Lemma~\ref{lem:central-mixing} gives an element
$E_{n,k}$ with
\[
 v_{\alpha,n,k}E_{n,k}=v_{\alpha,n,k}u_\alpha
 \quad(\alpha\in A),
\]
and with zero component on the complement of
$\bigvee_\alpha v_{\alpha,n,k}$. Checking these identities on their
central components shows that $E_{n,k}$ is an idempotent and that
$E_{n,k}aE_{n,k}=0$. All the $E_{n,k}$ are pairwise orthogonal:
components originating from different $u_\alpha$ are orthogonal,
and components originating from the same $u_\alpha$ have disjoint
central supports.

Discard the zero members and enumerate this countable family as
$(e_i)_{i\in I}$. Dimension additivity bounds the finite sums of
$\Delta(e_i)$ by $1$, so Lemma~\ref{lem:order-complete} defines
$g=\sum_i e_i$. Continuity of multiplication gives
\[
 g^2=g,\qquad ge_i=e_i=e_ig.
\]
For every $\alpha$, the identities $gu_\alpha=u_\alpha=u_\alpha g$
hold on each central piece $v_{\alpha,n,k}$, since that piece of
$u_\alpha$ is the corresponding piece of $E_{n,k}$. These pieces
join to the central support of $u_\alpha$, so the identities hold
in $R$. Thus $f=1-g$ is orthogonal to every original $u_\alpha$.

If $faf$ were noncentral in $fRf$, Lemma~\ref{lem:zero-compression}
in that corner would give a nonzero idempotent $e\leq f$ with
$eae=e(faf)e=0$. This would enlarge the original maximal family,
a contradiction. Hence $faf\in Z(fRf)$.
\end{proof}

\begin{lemma}\label{lem:sylvester}
Let $e,f$ be nonzero idempotents, let $S\in eRe$ and $N\in fRf$, and
let $p\in\mathbb Z[t]$ satisfy $p(N)=0$ and $p(S)\in(eRe)^\times$.
Then for every $b\in eRf$ and $c\in fRe$ there are $Z\in eRf$ and
$W\in fRe$ such that
\[
 SZ-ZN=b,\qquad NW-WS=c.
\]
Polynomial evaluations use the identities of the respective corners.
\end{lemma}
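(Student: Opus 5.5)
The plan is to invert the Sylvester operator $\tau(Z)=SZ-ZN$ on $eRf$ explicitly, using the polynomial identity $p(N)=0$ together with invertibility of $p(S)$. The classical trick is to consider the operators $L_S\colon Z\mapsto SZ$ and $R_N\colon Z\mapsto ZN$ on the abelian group $eRf$. These commute, and $\tau=L_S-R_N$.

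Write the divided difference
\[
 p(x)-p(y)=(x-y)\,q(x,y),\qquad q\in\mathbb Z[x,y].
\]
Since $L_S$ and $R_N$ commute, we get $p(L_S)-p(R_N)=\tau\circ q(L_S,R_N)=q(L_S,R_N)\circ\tau$. Evaluation uses the identities: $p(L_S)Z=p(S)Z$, because $e$ acts as the identity on $eRf$ from the left. Likewise $p(R_N)Z=Zp(N)=0$, because $f$ acts as the identity from the right. Hence
\[
 \tau\circ q(L_S,R_N)=q(L_S,R_N)\circ\tau=L_{p(S)},
\]
and $L_{p(S)}$ is bijective on $eRf$ with inverse $L_{p(S)^{-1}}$, where $p(S)^{-1}$ is the inverse in $eRe$. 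Thus $\tau$ is bijective. Concretely,
\[
 Z=q(L_S,R_N)\bigl(p(S)^{-1}b\bigr).
\]
Writing $q(x,y)=\sum_{i,j} q_{ij}x^iy^j$, this reads $Z=\sum q_{ij}S^i p(S)^{-1} b N^j$ with $S^0=e$ and $N^0=f$. This lies in $eRf$, and $SZ-ZN=b$ follows from the displayed identity, using that $p(S)^{-1}$ commutes with $S$.

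For the second equation, apply the same argument on $fRe$ to $\sigma(W)=NW-WS=L_N-R_S$. The same factorization gives $p(L_N)-p(R_S)=\sigma\circ q(L_N,R_S)$. Now $p(L_N)=0$ and $p(R_S)W=Wp(S)$, so $-\sigma\circ q(L_N,R_S)=R_{p(S)}$, which is invertible. Hence
\[
 W=-\,q(L_N,R_S)\bigl(c\,p(S)^{-1}\bigr).
\]

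The only subtle point is bookkeeping the constant terms of $p$: they must be interpreted as multiples of $e$, respectively $f$, acting trivially on the bimodules. This is exactly why the statement specifies that evaluations use the corner identities. Once that is kept straight, no analytic input or dimension estimate is needed, and the argument is purely algebraic.
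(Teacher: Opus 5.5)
Your proof is correct and is essentially the paper's argument: you evaluate the same integer divided-difference polynomial at the commuting operators $L_S,R_N$ (respectively $L_N,R_S$), and $p(N)=0$ kills one side so that the factorization yields $L_{p(S)}$ (respectively $-R_{p(S)}$). The only difference is cosmetic: you apply $p(S)^{-1}$ before the divided-difference operator, whereas the paper applies it after. This is immaterial, since multiplication by $p(S)^{-1}$ commutes with $L_S$, $R_N$, $L_N$ and $R_S$.
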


\begin{proof}
The divided difference
\[
 H(s,t)=\frac{p(s)-p(t)}{s-t}
\]
is a polynomial in two commuting variables with integer coefficients.
On $eRf$, write $L_S$ and $R_N$ for left multiplication by $S$ and
right multiplication by $N$. These operators commute, and
\[
 (L_S-R_N)H(L_S,R_N)=L_{p(S)}-R_{p(N)}=L_{p(S)}.
\]
Thus
\[
 Z=p(S)^{-1}H(L_S,R_N)(b)
\]
solves the first equation. On $fRe$ the same identity gives
\[
 (L_N-R_S)H(L_N,R_S)=-R_{p(S)},
\]
so
\[
 W=-H(L_N,R_S)(c)\,p(S)^{-1}
\]
solves the second equation. Multiplication by $p(S)^{-1}$ commutes
with the relevant multiplication operators because $p(S)^{-1}$
commutes with $S$.
\end{proof}

\begin{proof}[Proof of Theorem~\ref{thm:main}]
The case $a=0$ is immediate. Fix $a\ne0$ and choose the partition
from Lemma~\ref{lem:zero-partition}. Enumerate $I$ by $1,\ldots,m$
or by the positive integers. All sums below range over $I$; empty
sums are zero. Polynomial evaluations in a corner always use its
own identity. Put $c=faf\in Z(fRf)$.

Let $(p_j)$ be the polynomials in \eqref{eq:polynomials}. In each
nonzero corner $e_jRe_j$, choose a unital matrix decomposition of
size $d_j=\deg p_j$. Using its matrix units, form the companion
matrix $N_j$ of the monic integer polynomial $p_j$. Then
$p_j(N_j)=0$.

If $f\ne0$, Corollary~\ref{cor:corner-pair} supplies $P,Q\in fRf$
such that
\[
 [P,Q]=f,\qquad p_j(P)\in(fRf)^\times\quad(j\geq1).
\]
If $f=0$, put $P=Q=0$ and omit all expressions involving inverses
in $fRf$. Set
\begin{equation}\label{eq:uniform-X}
 X=P+\sum_iN_i.
\end{equation}
The series exists by Lemma~\ref{lem:order-complete}, since
$\Delta(N_i)\leq\Delta(e_i)$ and the finite sums of
$\Delta(e_i)$ are bounded by $1$. The element $X$ commutes with
$f$ and every $e_i$, and its compressions to these corners are
$P$ and $N_i$, respectively.

For each $j\in I$, put $h_j=1-e_j$ and $S_j=h_jXh_j$. Since
$a\ne0$ and $e_jae_j=0$, we have $h_j\ne0$. The element $p_j(S_j)$
is invertible in $h_jRh_j$. Indeed, $p_j(P)$ is invertible when
$f\ne0$, and $p_j(N_i)$ is invertible in $e_iRe_i$ for every
$i\ne j$: the polynomials $p_i$ and $p_j$ are comaximal in
$\mathbb Z[t]$, and a Bezout identity evaluated at $N_i$ gives
the inverse. Consequently
\begin{equation}\label{eq:uniform-inverse}
 p_j(S_j)^{-1}=p_j(P)^{-1}
                  +\sum_{i\ne j}p_j(N_i)^{-1},
\end{equation}
where the first term is omitted if $f=0$. The series on the right
exists, since its $i$th term belongs to $e_iRe_i$ and has dimension
$\Delta(e_i)$. Multiplication on either side by $p_j(S_j)$ gives
$f+\sum_{i\ne j}e_i=h_j$, by orthogonality and continuity.
This proves the asserted invertibility.

The zero compression $e_jae_j=0$ implies $ae_j\in h_jRe_j$.
Applying Lemma~\ref{lem:sylvester} with $S=S_j$, $N=N_j$, and
$p=p_j$, choose $Z_j\in h_jRe_j$ such that
\[
 S_jZ_j-Z_jN_j=ae_j.
\]
Since $X$ is diagonal with respect to $e_j+h_j=1$, this says
\begin{equation}\label{eq:uniform-column}
 [X,Z_j]=ae_j.
\end{equation}
If $f\ne0$, apply the same lemma with $S=P$ and $N=N_j$ to choose
$W_j\in e_jRf$ satisfying
\begin{equation}\label{eq:uniform-row}
 [X,W_j]=N_jW_j-W_jP=e_jaf.
\end{equation}
If $f=0$, set $W_j=0$.

The rectangular supports give the dimension bounds
\[
 \Delta(Z_j)\leq\Delta(e_j),\qquad
 \Delta(W_j)\leq\Delta(e_j).
\]
Hence the element
\begin{equation}\label{eq:uniform-Y}
 Y=cQ+\sum_jZ_j+\sum_jW_j
\end{equation}
is well defined in $R$. More explicitly, let
$\epsilon_n=\sum_{j>n}\Delta(e_j)$, with the sum taken in the
order of $C(\Omega,\mathbb R)$. These tails decrease to zero.
By \eqref{eq:tail}, truncating the series in
\eqref{eq:uniform-X} changes $X$ by dimension at most $\epsilon_n$,
and truncating the two series in \eqref{eq:uniform-Y} changes $Y$
by dimension at most $2\epsilon_n$.

Since $c\in Z(fRf)$, we have $[X,cQ]=c[P,Q]=c=faf$, including
when $f=0$. Using \eqref{eq:uniform-column},
\eqref{eq:uniform-row}, and continuity of multiplication, we obtain
\[
 [X,Y]=faf+\sum_j ae_j+\sum_j e_jaf
       =faf+ag+gaf=a,
\]
because $g+f=1$. This calculation also covers finite families. If
the family is empty, then $f=1$, $c=a$, and the construction reduces
to $[P,aQ]=a$.
\end{proof}

\section{The Weyl division ring in characteristic zero}\label{sec:weyl}

Let $\K$ be a field of characteristic zero. Put
$\W=A_1(\K)=\K\langle x,y\rangle/(yx-xy-1)$ and $\QW=Q(\W)$.
As above, $\QW$ is obtained by inverting all nonzero elements of
$\W$; this is its classical Ore localization, since $\W$ is a
Noetherian domain and hence an Ore domain
\cite[Theorems~1.3.5 and~2.1.15]{MR}. Recall also the unique normal
form $P=\sum_{j=0}^m p_j(x)y^j$, with $p_j\in\K[x]$. These
facts follow from the usual filtration, whose associated graded
ring is the polynomial ring in two variables.

Extend the integer matrices of Section~\ref{sec:identity} to
$\K$. Let $A,B\in\mathfrak C_\K$ be the pair constructed in
Theorem~\ref{thm:identity} using the standard dyadic tower. The relation defines a homomorphism
$\varphi:\W\to\mathfrak C_\K$ by $x\mapsto B$, $y\mapsto A$.
We prove directly that all its nonzero values are units.

\begin{lemma}
\label{lem:weyl-rank}
Let $0\ne P=\sum_{j=0}^m p_j(x)y^j\in\W$, with $p_m\ne0$, and
let $d=\max_j\deg p_j$. On $V_n=\K[t]_{<n}$ set
$P_n=\sum_{j=0}^m p_j(X_n)D_n^j$. Then
\begin{equation}\label{eq:weyl-rank}
 \rank(P_n)\geq n-m-d.
\end{equation}
\end{lemma}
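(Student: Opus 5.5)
The plan is to compare the truncated operator $P_n$ with the honest differential operator $P=\sum_j p_j(t)\,\partial_t^j$ acting on $\K[t]$. The relation $[D_n,X_n]=I_n-nE_n$ from \eqref{eq:defect} fails only at the top monomial, so on low-degree polynomials $P_n$ and $P$ should agree. Then we bound the polynomial kernel of $P$ by $m$, using characteristic zero. If $n\leq m+d$ the inequality \eqref{eq:weyl-rank} is trivial, so assume $n>m+d$.

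\emph{Step 1: agreement on low degrees.} Put $U=\K[t]_{<n-d}\subseteq V_n$, which has dimension $n-d$. For $f\in U$ and each $j$, $D_n^jf=\partial_t^jf$ still lies in $U$. For $g\in U$ and $k\leq d$, every intermediate polynomial $t^ig$ with $i<k$ has degree at most $n-2$. Hence the truncation term $([t^{n-1}]\cdot)\,t^n$ in \eqref{eq:models} never fires, and $X_n^kg=t^kg$. Therefore $p_j(X_n)D_n^jf=p_j(t)\,\partial_t^jf$, so that
\[
 P_n|_U=P|_U .
\]
It follows that $\rank(P_n)\geq\dim U-\dim\ker(P|_U)\geq n-d-\dim\ker_{\K[t]}(P)$.

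\emph{Step 2: the polynomial kernel of $P$ has dimension at most $m$.} This is the main point, and it is where characteristic zero enters. Let $s=\max_j(\deg p_j-j)$, and let $J$ be the set of $j$ attaining this maximum. Write $c_j$ for the leading coefficient of $p_j$ and $k^{\underline j}=k(k-1)\cdots(k-j+1)$. Then the coefficient of $t^{k+s}$ in $P(t^k)$ is
\[
 \chi(k)=\sum_{j\in J}c_j\,k^{\underline j},
\]
and all other terms of $P(t^k)$ have lower degree. Lower-degree monomials $t^{k'}$ with $k'<k$ contribute only degrees below $k+s$. The falling factorials are linearly independent polynomials in $k$, so $\chi$ is a nonzero polynomial over $\K$ of degree at most $m$. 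Since $\operatorname{char}\K=0$, the integers $0,1,2,\dots$ are distinct in $\K$, so $\chi(k)=0$ for at most $m$ nonnegative integers $k$.

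Now take any nonzero $f\in\ker P$ with leading term of degree $k$. Then the coefficient of $t^{k+s}$ in $Pf$ equals the leading coefficient of $f$ times $\chi(k)$, so $\chi(k)=0$. Choose a basis of $\ker P$ in echelon form, with pairwise distinct leading degrees. The leading degrees are then distinct roots of $\chi$, so there are at most $m$ basis elements and $\dim\ker P\leq m$.

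Combining the two steps gives $\rank(P_n)\geq n-d-m$, which is \eqref{eq:weyl-rank}. The only delicate points are the exact degree bookkeeping in Step 1, where the cutoff $n-d$ ensures no truncation, and the nonvanishing of $\chi$ as a polynomial in Step 2.
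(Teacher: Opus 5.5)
Your proof is correct, but it takes a different route from the paper in both steps.

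\textbf{Kernel bound.} The paper works at a finite ordinary point. It picks $a\in\K$ with $p_m(a)\neq0$ and compares the coefficient of $(t-a)^{r-m}$ in $Lf$. This shows that a nonzero kernel element cannot vanish at $a$ to order $r\geq m$. Hence the first $m$ Taylor coefficients at $a$ inject $\ker L$ into $\K^m$. You work at infinity instead. You build the indicial polynomial $\chi$ from the dominant indices $J$ and show that the leading degrees of kernel elements are nonnegative integer roots of $\chi$.

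\textbf{Truncation.} The paper writes $P_n=\pi_nL|_{V_n}$ with $L(V_n)\subseteq V_{n+d}$, and pays for the $d$ dimensions of $\ker(\pi_n|_{V_{n+d}})$ in the target. You pay $d$ in the source instead, by restricting to $\K[t]_{<n-d}$, where the truncation in $X_n$ never fires. These two bookkeepings are equivalent.

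\textbf{What each route buys.} Your route needs no choice of point, so it never uses that $\K$ is infinite. It also gives the sharper bound $\dim\ker L\leq\#\{k\in\mathbb Z_{\geq0}:\chi(k)=0\}\leq\max J$, which can be well below $m$. The paper's route is the classical initial-value uniqueness argument. It only uses the top coefficient $p_m$, without sorting out which $p_j$ dominate at infinity.

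In both arguments characteristic zero enters exactly once. For the paper, it gives $r(r-1)\cdots(r-m+1)\neq0$ when $r\geq m$. For you, it says that the images of $0,1,2,\dots$ in $\K$ are distinct, so a nonzero $\chi$ of degree at most $m$ vanishes at no more than $m$ of them.
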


\begin{proof}
On $\K[t]$, consider $L=\sum_{j=0}^m p_j(t)(d/dt)^j$.
Choose $a\in\K$ with $p_m(a)\ne0$, which is possible because
$\K$ is infinite. We claim that a nonzero polynomial in $\ker L$
cannot vanish at $a$ to order $r\geq m$. Indeed, if
$f=c(t-a)^r+\text{higher powers}$, $c\ne0$, the coefficient of
$(t-a)^{r-m}$ in $Lf$ is
\[
 p_m(a)c\,r(r-1)\cdots(r-m+1)\ne0.
\]
All terms with fewer derivatives have higher vanishing order.
For $m=0$ the product is interpreted as $1$, and the same argument
shows that $\ker L=0$. For $m>0$, the map taking the first $m$
Taylor coefficients at $a$ is therefore injective on $\ker L$.
In either case, $\dim_\K\ker L\leq m$.

Let $\pi_n:\K[t]\to V_n$ delete all terms of degree at least $n$.
Since differentiation preserves $V_n$ and
$p_j(X_n)g=\pi_n(p_j(t)g)$ for $g\in V_n$, we have
$P_n=\pi_nL|_{V_n}$. Furthermore $L(V_n)\subseteq V_{n+d}$.
The restriction of $\pi_n$ to $V_{n+d}$ has a kernel of dimension
$d$, whereas $L|_{V_n}$ has a kernel of dimension at most $m$.
This proves \eqref{eq:weyl-rank}.
\end{proof}

\begin{theorem}\label{thm:weyl}
The homomorphism $\varphi$ extends uniquely to a unital embedding
$\QW\hookrightarrow\mathfrak C_\K$. Every nonzero element of
$\QW$ has image of normalized rank one.
\end{theorem}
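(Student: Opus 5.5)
We show that $\varphi(P)$ has normalized rank one for every nonzero $P\in\W$, and then extend to $\QW$ by the universal property of Ore localization.

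\emph{Step 1: rank of $\varphi(P)$.} Write $P=\sum_{j}p_j(x)y^j$ in normal form. In $\mathfrak C_\K$ the elements $A_k=C_k^{-1}D_{n_k}C_k$ and $B_k=C_k^{-1}X_{n_k}C_k$, with $n_k=2^k$, converge in rank to $A$ and $B$ by \eqref{eq:cauchy}. Continuity of polynomial evaluation, via \eqref{eq:product-cont}, gives that
\[
 \sum_j p_j(B_k)A_k^j=C_k^{-1}P_{n_k}C_k
\]
converges in rank to $\varphi(P)$. The order of the factors matches the normal form, and the noncommutative evaluation is still continuous since it is a finite sum of products. Conjugation by the unit $C_k$ preserves rank, so
\[
 \rho\bigl(C_k^{-1}P_{n_k}C_k\bigr)=\frac{\rank(P_{n_k})}{n_k}\ \geq\ 1-\frac{m+d}{2^k}
\]
by Lemma~\ref{lem:weyl-rank}. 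Rank continuity \eqref{eq:rank-cont} then gives $\rho(\varphi(P))\geq1$, hence $\rho(\varphi(P))=1$. Since $\mathfrak C_\K$ is irreducible, its dimension function is the scalar $\rho$, so Lemma~\ref{lem:unit} (with $e=1$) shows that $\varphi(P)$ is a unit.

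\emph{Step 2: extension.} $\QW$ is the classical Ore localization of $\W$ at $\W\setminus\{0\}$, and $\varphi$ sends every nonzero element to a unit. The universal property of Ore localization therefore gives a unique unital homomorphism $\tilde\varphi:\QW\to\mathfrak C_\K$ with $\tilde\varphi(P^{-1}Q)=\varphi(P)^{-1}\varphi(Q)$. It is $\K$-linear because $\varphi$ is. Uniqueness also holds for any extension to $\QW$, since such an extension must send $P^{-1}$ to $\varphi(P)^{-1}$.

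\emph{Step 3: injectivity and rank.} Any unital homomorphism out of a division ring into a nonzero ring is injective. Let $0\neq u=P^{-1}Q$ with $P,Q\neq0$. Then $\tilde\varphi(u)$ is a product of two units, and a unit has rank one by Lemma~\ref{lem:unit}, so $\rho(\tilde\varphi(u))=1$.

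The main obstacle is Step 1, specifically two points:
\begin{itemize}
\item The approximants must sit in the compatible tower embedding. Lemma~\ref{lem:weyl-rank} gives a rank bound for $P_n$ on $V_n$ in monomial coordinates, and the conjugations by $C_k$ must be accounted for when passing to $\mathfrak C_\K$.
\item Truncating through $\pi_n$ does not commute with multiplication. This causes no problem, because $B_k$ itself is defined as the conjugate of the truncated operator $X_{n_k}$, so $p_j(B_k)=C_k^{-1}p_j(X_{n_k})C_k$ holds exactly.
\end{itemize}
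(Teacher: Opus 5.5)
Your proposal is correct and follows essentially the same route as the paper. Both arguments use the simultaneous similarity $\sum_j p_j(B_k)A_k^j=C_k^{-1}P_{2^k}C_k$, then Lemma~\ref{lem:weyl-rank} with rank continuity to get $\rho(\varphi(P))=1$, then Lemma~\ref{lem:unit} for invertibility, and finally the universal property of Ore localization together with injectivity of unital maps out of a division ring. Your remarks on truncation and on conjugation by $C_k$ just spell out what the paper leaves implicit.
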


\begin{proof}
For a nonzero $P$ in the normal form of Lemma~\ref{lem:weyl-rank},
the simultaneous similarities in Theorem~\ref{thm:identity} give
\[
 \sum_{j=0}^m p_j(B_k)A_k^j=C_k^{-1}P_{2^k}C_k.
\]
The expression on the left converges to $\varphi(P)$ in rank.
By \eqref{eq:weyl-rank} and rank continuity,
\[
 \rho(\varphi(P))
 =\lim_{k\to\infty}2^{-k}\rank(P_{2^k})=1.
\]
In particular $\varphi$ is injective and, by Lemma~\ref{lem:unit},
sends every nonzero element to a unit. The universal property of
Ore localization gives the extension to $\QW$. A unital
homomorphism from a division ring is injective. Its nonzero values
are units and hence have rank one.
\end{proof}

Composing with Lemma~\ref{lem:embedding} proves
Corollary~\ref{thm:division}.
A related rank argument followed by Ore localization was used by
Carderi and the first author
\cite[Lemma~4.3 and the proof of Theorem~4.2]{CT} to embed quotient
division rings of certain group algebras into continuous rings over
finite fields.

\begin{remark}
\label{rem:weyl-extension}
Let $S$ be a regular unital $\K$-algebra with a faithful normalized
Sylvester matrix rank function. Every unital $\K$-algebra
homomorphism $\psi:\W\to S$ extends uniquely to an embedding
$\QW\hookrightarrow S$, preserving rectangular matrix ranks.
Indeed, the matrix rank of $S$, normalized by $\rank(I_r)=r$,
pulls back along $\psi$ to a Sylvester matrix rank function on
$\W$. Since $\W$ is simple and left Noetherian,
the uniqueness theorem of Jaikin-Zapirain and L\'opez-\'Alvarez
\cite[Proposition~4.2]{JL} identifies this rank with the rank
induced by $\QW$. Hence every nonzero element of $\W$ has image
of rank one. Such an element $a\in S$ is a unit: choose $b\in S$
with $aba=a$; the idempotents $ab$ and $ba$ have rank one, so
$ab=ba=1$. Ore localization gives the extension, and matrix
reduction over $\QW$ proves the assertion about rectangular ranks.
\end{remark}

\section{Affiliated operators and the Heisenberg relation}
\label{sec:affiliated}

Let $\N\subseteq\mathcal B(\mathcal H)$ be a finite von Neumann
algebra. A closed densely defined operator is \emph{affiliated}
with $\N$ if it commutes, including its domain, with every unitary
in $\N'$. With sums and products defined by closure, these operators
form the unital $*$-algebra $\U(\N)$. For this construction and
its role in commutator questions, see Kadison and Liu
\cite[Sections~6--7, in particular Theorem~6.13]{KL}, and
Kadison, Liu and the first author \cite[Section~2]{KLT}.

If $\N$ has a faithful normal tracial state $\tau$, define
\[
 \rho(T)=\tau(r(T))=\tau(\ell(T)),\qquad T\in\U(\N),
\]
where $r(T)$ and $\ell(T)$ are its right and left support
projections. Their traces agree by polar decomposition. This rank
satisfies the scalar versions of \eqref{eq:rank-basic},
\eqref{eq:rank-ineq}, \eqref{eq:rank-cont} and \eqref{eq:product-cont}.
The rank-completion theorem identifies $\U(\N)$ with the
completion of $\N$ in this metric; see the first author
\cite[Lemma~2.2]{Thom}. Density can also be seen directly from
spectral truncations: if $e_n=1_{[0,n]}(|T|)$, then $Te_n\in\N$
and $\rho(T-Te_n)\leq\tau(1-e_n)\to0$. A unital matrix
$*$-subalgebra $M_m(\C)\subseteq\N$ carries normalized matrix
rank, since $\tau$ restricts to its unique normalized trace.

Under the same trace hypothesis, $\tau$ induces a faithful normalized Sylvester matrix rank
on the regular ring $\U(\N)$. Thus Remark~\ref{rem:weyl-extension}
shows that every unital complex algebra homomorphism
$A_1(\C)\to\U(\N)$ extends uniquely to an embedding of
$Q(A_1(\C))$, preserving rectangular matrix ranks.

We use the classical fact that every von Neumann algebra $\M$ of
type $\mathrm{II}_1$ contains a unital hyperfinite $\mathrm{II}_1$
subfactor $\R$ generated by a compatible dyadic tower of matrix
$*$-subalgebras; see \cite[Exercise~12.4.25 and Theorem~12.2.1]{KR}.

\begin{proof}[Proof of Corollary~\ref{cor:aff-intro}]
Choose such a subfactor $\R\subseteq\M$ and its matrix tower.
The inclusion of this tower into $\U(\R)$ is isometric for
normalized rank and therefore extends to an embedding
$\mathfrak C_\C\hookrightarrow\U(\R)$.
Theorems~\ref{thm:identity} and~\ref{thm:weyl} give the identity
commutator and the division-ring embedding there. The natural
inclusion $\U(\R)\hookrightarrow\U(\M)$ extends the unital
inclusion of finite von Neumann algebras. For example, an affiliated
operator is represented by a fraction $ab^{-1}$ with $a,b\in\R$
and $b$ having zero kernel; the same fraction defines its image
in $\U(\M)$.

By \cite[Proposition~4.4]{Schneider}, $\U(\M)$ is a continuous
regular ring, and $p\mapsto p\U(\M)$ identifies the projection
lattice of $\M$ with its principal-right-ideal lattice.
Given a nonzero idempotent $e\in\U(\M)$, choose a projection
$p\in\M$ with $e\U(\M)=p\U(\M)$. The corners
$e\U(\M)e$ and $p\U(\M)p$ are isomorphic. Since $p\M p$
contains noncommuting projections, $e$ is not abelian.
Thus $\U(\M)$ has no nonzero abelian idempotents, and
Theorem~\ref{thm:main} applies to every element of $\U(\M)$.
\end{proof}

Let $\M$ be a type $\mathrm{II}_1$ factor with normalized trace $\tau$.
If $[A,B]=1$ in $\U(\M)$, the theorem
of Ber, Sukochev and Zanin \cite{BSZ} implies that neither $A$ nor
$B$ is normal. Nayak \cite[Corollaries~5.3 and~5.4]{Nayak} gives
\[
 \tau(\log^+|A|)=\infty\quad\hbox{or}\quad
 \tau(\log^+|B|)=\infty,
\]
and shows that $A-\lambda1$ and $B-\lambda1$ are invertible in
$\U(\M)$ for every $\lambda\in\C$. In particular both factors
have empty point spectrum. For the identity pair obtained from
$\mathfrak C_\C$, the invertibility of all scalar shifts follows
directly from Theorem~\ref{thm:weyl}.

\section*{Acknowledgments}

GPT-6 Astra and GPT-5.6 Sol were used to assist with exploring proof
strategies, developing and checking arguments, searching the literature,
and revising the exposition. The authors take full responsibility for
the mathematical content and the final text.

\end{document}